\numberwithin{equation}{section}
\begin{document}

\newtheorem{thm}{Theorem}[section]
\newtheorem{prop}[thm]{Proposition}
\newtheorem{lem}[thm]{Lemma}
\newtheorem{cor}[thm]{Corollary}

\newtheorem{rem}[thm]{Remark}

\newtheorem*{defn}{Definition}

\newcommand{\DD}{\mathbb{D}}
\newcommand{\NN}{\mathbb{N}}
\newcommand{\ZZ}{\mathbb{Z}}
\newcommand{\QQ}{\mathbb{Q}}
\newcommand{\RR}{\mathbb{R}}
\newcommand{\CC}{\mathbb{C}}
\renewcommand{\SS}{\mathbb{S}}

\renewcommand{\theequation}{\arabic{section}.\arabic{equation}}

\newcommand{\Erfc}{\mathop{\mathrm{Erfc}}}    
\newcommand{\supp}{\mathop{\mathrm{supp}}}    
\newcommand{\re}{\mathop{\mathrm{Re}}}   
\newcommand{\im}{\mathop{\mathrm{Im}}}   
\newcommand{\dist}{\mathop{\mathrm{dist}}}  
\newcommand{\link}{\mathop{\circ\kern-.35em -}}
\newcommand{\spn}{\mathop{\mathrm{span}}}   
\newcommand{\ind}{\mathop{\mathrm{ind}}}   
\newcommand{\rank}{\mathop{\mathrm{rank}}}   
\newcommand{\ol}{\overline}
\newcommand{\pa}{\partial}
\newcommand{\ul}{\underline}
\newcommand{\diam}{\mathrm{diam}}
\newcommand{\lan}{\langle}
\newcommand{\ran}{\rangle}
\newcommand{\tr}{\mathop{\mathrm{tr}}}
\newcommand{\diag}{\mathop{\mathrm{diag}}}
\newcommand{\dv}{\mathop{\mathrm{div}}}
\newcommand{\na}{\nabla}
\newcommand{\nr}{\Vert}
\newcommand{\curl}{\mathop{\mathrm{curl}}}    

\newcommand{\al}{\alpha}
\newcommand{\be}{\beta}
\newcommand{\ga}{\gamma}  
\newcommand{\Ga}{\Gamma}
\newcommand{\de}{\delta}
\newcommand{\De}{\Delta}
\newcommand{\ve}{\varepsilon}
\newcommand{\fhi}{\varphi} 
\newcommand{\la}{\lambda}
\newcommand{\La}{\Lambda}    
\newcommand{\ka}{\kappa}
\newcommand{\vro}{\varrho}
\newcommand{\si}{\sigma}
\newcommand{\Si}{\Sigma}
\newcommand{\te}{\theta}
\newcommand{\zi}{\zeta}
\newcommand{\om}{\omega}
\newcommand{\Om}{\Omega}

\newcommand{\cA}{\mathcal{A}}
\newcommand{\cB}{\mathcal{B}}
\newcommand{\cC}{\mathcal{C}}
\newcommand{\cD}{\mathcal{D}}
\newcommand{\cE}{\mathcal{E}}
\newcommand{\cG}{{\mathcal G}}
\newcommand{\cH}{{\mathcal H}}
\newcommand{\cI}{{\mathcal I}}
\newcommand{\cJ}{{\mathcal J}}
\newcommand{\cK}{{\mathcal K}}
\newcommand{\cL}{{\mathcal L}}
\newcommand{\cM}{\mathcal{M}}
\newcommand{\cN}{{\mathcal N}}
\newcommand{\cP}{\mathcal{P}}
\newcommand{\cR}{{\mathcal R}}
\newcommand{\cS}{{\mathcal S}}
\newcommand{\cT}{{\mathcal T}}
\newcommand{\cU}{{\mathcal U}}
\newcommand{\cX}{\mathcal{X}}

\title[The interior Backus problem] {The interior Backus problem: \\
local resolution in H\"older spaces}

\author{Toru Kan} 
\address{Department of Mathematics, Osaka Metropolitan University,
1-1 Gakuen-cho, Naka-ku, Sakai, 599-8531, Japan.}
    \email{kan@omu.ac.jp}
    \urladdr{}

\author{Rolando Magnanini} 
\address{Dipartimento di Matematica ed Informatica ``U.~Dini'',
Universit\` a di Firenze, viale Morgagni 67/A, 50134 Firenze, Italy.}
    \email{magnanini@unifi.it}
    \urladdr{http://web.math.unifi.it/users/magnanin}

\author{Michiaki Onodera} 
\address{Department of Mathematics, Tokyo Institute of Technology, 
2-12-1 Ookayama, Meguro-ku, Tokyo 152-8551, Japan.}
    \email{onodera@math.titech.ac.jp}
    \urladdr{}

\begin{abstract}
We prove an existence result for the interior Backus problem in the Euclidean ball. The problem consists in determining a harmonic function in the ball from the knowledge of the modulus of its gradient on the boundary. The problem is severely nonlinear. From a physical point of view, the problem can be interpreted as the determination of the velocity potential of an incompressible and irrotational fluid inside the ball from measurements of the velocity field's modulus on the boundary. The linearized problem is an irregular oblique derivative problem, for which a phenomenon of loss of derivatives occurs. As a consequence, a solution by linearization of the Backus problem becomes problematic. Here, we linearize the problem around the vertical height solution and show that the loss of derivatives does not occur for solutions which are either (vertically) axially symmetric or oddly symmetric in the vertical direction. A standard fixed point argument is then feasible, based on ad hoc weighted estimates in H\"older spaces. 
\end{abstract}

\date{\today}

\keywords{Backus problem, fully nonlinear boundary conditions, irregular oblique derivative problem}
    \subjclass[2010]{35J65, 35C15, 35B07}

\maketitle

\raggedbottom

\section{Introduction}

Let $\Om$ be a bounded domain in the Euclidean space $\RR^N$, $N\ge 2$, with boundary $\Ga$. Let $g$ be a positive continuous function on $\Ga$. The \textit{interior Backus  problem} consists in determining a function $u\in C^1(\ol{\Om})\cap C^2(\Om)$ such that
\begin{equation}
\label{backus-problem}
\De u =0 \ \mbox{ in } \ \Om, \quad |\na u|=g \ \mbox{ on } \ \Ga.
\end{equation}
\par
This problem was first considered and completely solved in \cite{Ba1} (see also \cite{Ma}), for $N=2$. There, Backus was motivated by  a problem in geophysics, which entails the reconstruction of the gravitational or geomagnetic terrestrial field from measurements of its intensity on the Earth's surface $\cS$. In fact, if one models the Earth as the unit ball $B$, then the relevant geophysical problem amounts to determine solutions $u$ of \eqref{backus-problem}, with $\Om=\RR^N\setminus\ol{B}$ and $\Ga=\cS$, such that 
\begin{equation}
\label{decay-at-infinity}
u(x)\to 0 \ \mbox{ as } \ |x|\to\infty.
\end{equation}
This is what we call the \textit{exterior Backus problem}.
\par
When $N=2$, by the Riemann mapping theorem, we know that any simply connected (seen as a domain on the Riemann sphere) proper subdomain of the complex plane $\CC$  is conformally equivalent to the unit disk. Moreover, the harmonicity of functions is preserved by conformal mappings, while the modulus of their gradients changes just by a positive factor (which depends on the derivative of the conformal map). So, that is one reason why problem \eqref{backus-problem} has some interest.
\par
Another physical motivation, which genuinely pertains the interior problem setting, has to do with the study of incompressible and irrotational fluid flows. Let $\vec{V}$ be the velocity field of a fluid and let $\rho$ be its density. Any fluid flow obeys the continuity equation:
$$
\dv(\rho\,\vec{V})+\rho_t=0.
$$
If a fluid is incompressible, its density is constant. In particular, we have that $\rho_t=0$, and hence the continuity equation reduces to $\dv(\vec{V})=0$. If the fluid is irrotational, then $\curl\,(\vec{V})=0$, and hence there exists a harmonic velocity potential $u$ such that $\na u=\vec{V}$. Thus, solving  \eqref{backus-problem} can be interpreted as the determination of the velocity of the fluid inside the domain $\Om$ from measurements of its modulus on the boundary. 
\par
It is worthwhile to clarify from this point of view the results obtained in \cite{Ba1} for planar domains. As already mentioned, in this case we can always assume that $\Om$ is the unit disk. We shall describe the situation in the simplest case in which $g$ is assumed to be constant, say $g\equiv 1$. As shown in \cite{Ba1}, or by simply invoking \textit{Weierstrass factorization theorem} (see \cite{Ru}), in the complex variable $z=x+i\,y$, when $g \equiv 1$, the complex gradient $u_x-i\,u_y$ of $u$ is uniquely determined by the \textit{Blaschke product}
$$
u_x-i\,u_y=e^{i\,\al}\prod_{j=1}^n\frac{z-z_j}{1-\ol{z}_j\,z},
$$
where $\al\in\RR$ and $z_1, \dots, z_n\in\Om$ are given parameters. In fact, each factor in the product has unitary modulus on the unit circle. Thus, up to a rotation of an angle $\al$, the velocity field of the fluid can be uniquely determined from its modulus on the boundary if we know the position and the nature (the multiplicity, so as to speak) of its \textit{stagnation points} $z_1, \dots, z_n$.
\par
In order to conclude our motivations, it may be of interest to mention \cite{HHIKL}, in which it can be found a possible application to encephalography by magnetic means.

\smallskip

When $N\ge 3$, we can still use the Kelvin transformation to map the exterior of the ball to its interior (or an exterior domain to a bounded one) by preserving the harmonicity of functions. However, the boundary condition in \eqref{backus-problem} changes quite considerably. In fact, if 
$$
\cK w(y)=|y|^{2-N}\,w\left(\frac{y}{|y|^2}\right), \ \ y\ne 0,
$$
is the standard Kelvin transformation of a function $w:\RR^N\setminus\{0\}\to\RR$, we have that 
$$
\na \cK w(y)=|x|^N\left\{\na w(x)-\Bigl[2\,x\cdot\na w(x)+(N-2)\,w(x)\Bigr] \frac{x}{|x|^2}\right\},
$$
with $y=x/|x|^2$. In particular, if we apply the transformation to the exterior of unit ball $B$ centered at the origin, we obtain that the condition $|\na u|=g$ on $\cS$ changes into 
$$
|\na U+(N-2)\,U\,\nu|=g \ \mbox{ on } \ \cS,
$$
with $U=\cK u$. Here, $\nu$ denotes the exterior unit normal vector field on $\cS$.
\par
Another feature for which the interior and exterior problems differ from one another is that the latter admits solutions $u$ whose normal derivative $u_\nu$ does not change sign on $\cS$ (e.g. the fundamental solution or, more in general, the capacity potential of a bounded domain), while in the former the divergence theorem tells us that the mean value on $\cS$ of $u_\nu$ must be equal to zero. The positivity of the normal derivative of the solution has been useful in \cite{Jo} to obtain the local resolution of the exterior gravitational Backus problem for the Earth near the so-called \textit{monopole} $\Phi(x)=1/|x|$, based on a fixed-point argument. In other words, when $N=3$ and $\Om=\RR^3\setminus\ol{B}$, 
the existence and uniqueness of a solution $u$ of \eqref{backus-problem}-\eqref{decay-at-infinity} is obtained as the perturbation
$$
u=\Phi+v,
$$
where $v$ is harmonic in $\Om$ and decays to zero at infinity. This result holds if the data $g$ is sufficiently close to $1$ in a H\"older norm. (Notice that $1$ is the modulus of the gradient of the monopole on $\cS$.)
\par
We also mention that the positivity of $u_\nu$ is also used in \cite{DDO} to construct a comparison principle for suitably defined viscosity solutions for the exterior Backus problem and hence develop a nonlinear approach to the problem. 
\par
When the positivity property is not available, the only existence result up to date is given in \cite{KMO}. There, we consider, in physical dimesion $N=3$, the local resolution of the exterior Backus problem \eqref{backus-problem}-\eqref{decay-at-infinity} near the so-called \textit{dipole}:
$$
d(x)=\frac{x_3}{|x|^3}.
$$
The gradient $\na d$ models the  terrestrial  geomagnetic field. 
\par
The problem of finding solutions of \eqref{backus-problem}-\eqref{decay-at-infinity} of the type
$$
u=d+v,
$$
with $v$ harmonic in $\Om$, which decays to zero at infinity, has another level of difficulty, though. We can see this if we linearize \eqref{backus-problem}-\eqref{decay-at-infinity} near $d$. Indeed, we obtain the boundary value problem:
\begin{equation}
\label{oblique-dipole}
\De v=0 \ \mbox{ in } \ \Om, \quad \na d\cdot\na v=\fhi \ \mbox{ on } \ \cS, \quad v(x)\to 0 \ \mbox{ as } \ |x|\to\infty.
\end{equation}
This can be classified as an \textit{irregular} oblique derivative problem. In fact, in contrast with the monopole case, in which the vector field $\na\Phi$ governing the linearized problem is nothing else than the normal field on $\cS$, in \eqref{oblique-dipole}, instead, the field $\na d$
points outward to the Earth's surface on the southern hemisphere,
becomes tangential on the equator $\cE=\{x\in\cS : x_N = 0\}$, and points inward
on the northern hemisphere. For this reason, \eqref{oblique-dipole} suffers of two drawbacks. The first one is a severe lack of uniqueness, since its solutions can be uniquely determined only up to prescribing Dirichlet boundary values on $\cE$. The second one is a \textit{loss of regularity:} the expected solution $v$ does not gain the desired regularity. In other words, the regularity of $v$ does not improve that of the data $\varphi$ by one order --- it can be seen that it falls short of  $1/2$. This inconvenience makes the perturbation approach more complicated, because the iterative scheme on which a fixed-point argument is based upon \textit{loses derivatives} at each step. 
\par
In presence of a loss of derivatives, the Nash-Moser implicit function theorem has worked in other contexts (see the pioneering works \cite{Na, Mo1, Mo2, Ha}, for instance). Unfortunately, in attacking the Backus problem, this plan is so far out of reach. In fact, one would need sufficiently precise estimates for the relevant oblique derivative problems. Namely, 
it is necessary to have an accurate control not only for the solution of \eqref{oblique-dipole},
but also for those of a class of oblique derivative problems obtained by perturbing $\na d$.
\par
Nevertheless, in \cite{KMO} we showed that \eqref{backus-problem}-\eqref{decay-at-infinity} is solvable near $d$ for solutions which are \textit{axially symmetric} around the Earth's axis $\cA=\{ \la\,(0,0,1): \la\in\RR\}$. In fact, we show that the solutions of the linearized problem \eqref{oblique-dipole} with this symmetry no longer lose derivatives in an appropriate scale of fractional Sobolev spaces on $\cS$. This result is made possible by the use of spherical harmonics on $\cS$. As a consequence, the relevant fixed-point scheme can be mended and the existence of a solution of \eqref{backus-problem}-\eqref{decay-at-infinity} is obtained if $g$ is sufficiently close to $|\na d|$ in some fractional Sobolev norm.
\par
In the present paper, for $N\ge 3$, we turn our attention to the local resolution of the interior Backus problem  \eqref{backus-problem} in the framework of H\"older spaces. This framework is that used in 
\cite{Jo} for the gravitational case. The simplest instance in this case is to consider solutions of \eqref{backus-problem} in the form:
$$
u(x)=x_N+v(x),
$$
where $v$ is harmonic in $\Om$. If we place the $x_N$-axis horizontally, in the fluidmechanical framework mentioned above, we want to determine the velocity of the fluid inside a domain from measurements of its modulus on the boundary as a perturbation of that of an horizontal laminar flow with potential 
$$
f(x)=x_N.
$$
\par
In this case, the associated linearized problem is simply:
\begin{equation}
\label{oblique-laminar}
\De v =0 \ \mbox{ in } \ \Om, \quad  \pa_{x_N} v=\fhi \ \mbox{ on } \ \cS,
\end{equation}
where $\varphi$ is a given function.
It is clear that the vector field governing \eqref{oblique-laminar} is $e_N=(0,\dots, 0, 1)$, that shows similar qualitative features to those of $\na d$, in the sense that $-e_N$ points inward on the northern hemisphere, is tangential on $\cE$, and points outward on the southern hemisphere. Also in this case, though, a loss of derivatives occurs for problem \eqref{oblique-laminar}. Nevertheless, we shall see that the somewhat easier oblique derivative condition in \eqref{oblique-laminar} allows a treatment of \eqref{backus-problem} in the framework of H\"older spaces, 
in the cases where solutions are oddly symmetric with respect to the hyperplane $x_N=0$
or axially symmetric around $x_N$-axis.
\par
In order to describe the main result of this paper, we need to introduce some notation.
For $k=0, 1, 2, \dots$ and $\al \in (0,1)$, 
we define the function spaces:
\begin{gather*}
C_{\rm even}^{k+\al}(\ol{B})
 =\{ \fhi \in C^{k+\al}(\ol{B}): \fhi(x',x_N)=\fhi(x',-x_N)\},
\\
C_{\rm odd}^{k+\al}(\ol{B})
 =\{ \fhi \in C^{k+\al}(\ol{B}): \fhi(x',x_N)=-\fhi(x',-x_N)\},
\\
C_{\rm ax}^{k+\al}(\ol{B})
 =\{ \fhi \in C^{k+\al}(\ol{B}): \fhi(x',x_N)=\fhi(|x'|e_1',x_N) \}.
\end{gather*}
Here $e_1'=(1,0,\ldots,0) \in \RR^{N-1}$.
We note that these are closed subspaces of the space $C^{k+\al}(\ol{B})$ of $k$-differentiable functions whose derivatives up to the order $k$ are $\al$-H\"older continuous.
The usual H\"{o}lder seminorm and norm on $C^{k+\al}(\ol{B})$ 
are denoted by $[\cdot]_{\al,\ol{B}}$ and $|\cdot|_{k+\al,\ol{B}}$, respectively, and defined as:
\begin{gather*}
[u]_{\al,\ol{B}}=\sup_{x,y \in \ol{B}, \, x \neq y} \frac{|u(x)-u(y)|}{|x-y|^\al},
\\
|u|_{k+\al,\ol{B}}=\sum_{|\ga| \le k} |D^\ga u|_{0,\ol{B}}
 +\sum_{|\ga|=k} [D^\ga u]_{\al,\ol{B}}.
\end{gather*}
Here, $|\cdot|_{0,\ol{B}}$ stands for the standard (uniform) maximum norm.
\par
With these premises, 
we can state the main result of this paper as follows.

\begin{thm}
\label{thm:existence0}
Let $\al \in (0,1)$ and set $\Om=B$.
Then there exist positive constants $\de_0$ and $C$ with the following properties.
\begin{itemize}
\item[(i)]
If $g \in C^{1+\al}_{\rm even}(\ol{B})$ and
\begin{equation*}
|g -1|_{1+\al,\ol{B}} \le \de_0,
\end{equation*}
then problem \eqref{backus-problem} has a solution 
$u \in C^2(B) \cap C^{1+\al}_{\rm odd}(\ol{B})$ satisfying
\begin{equation*}
|u-f|_{1+\al,\ol{B}} \le C\,|g -1|_{1+\al,\ol{B}}.
\end{equation*}
\item[(ii)]
If $g \in C^{1+\al}_{\rm ax}(\ol{B})$, $h \in \RR$ and
\begin{equation*}
|g -1|_{1+\al,\ol{B}} \le \de_0,
\end{equation*}
then problem \eqref{backus-problem} has a solution 
$u \in C^2(B) \cap C^{1+\al}_{\rm ax}(\ol{B})$ satisfying
\begin{equation*}
u=h \ \mbox{ on } \ \cE,
\end{equation*}
and
\begin{equation*}
|u-h-f|_{1+\al,\ol{B}} \le C\,\left( |g -1|_{1+\al,\ol{B}}\right).
\end{equation*}
\end{itemize}
\end{thm}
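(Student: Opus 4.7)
The plan is a Banach fixed-point scheme based on the linearization of \eqref{backus-problem} about the vertical height $f(x)=x_N$. Writing $u=f+v$ in case (i) and $u=h+f+v$ in case (ii), with $v$ harmonic in $B$ and belonging to the prescribed symmetry class, the condition $|\na u|^2=g^2$ on $\cS$ becomes
\begin{equation*}
\pa_{x_N}v=\frac{g^2-1-|\na v|^2}{2}=:\Phi(v,g) \quad \mbox{on }\cS.
\end{equation*}
The symmetry requirements are consistent: in case (i), if $v\in C^{1+\al}_{\rm odd}(\ol{B})$, then $\pa_{x_N}v$ and $|\na v|^2$ are both even in $x_N$, matching the evenness of $g^2$; in case (ii), axial symmetry is preserved by every operation involved, and the condition $u=h$ on $\cE$ translates into $v=0$ on $\cE$ since $f\equiv 0$ there.

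I would then define the map $T:v\mapsto w$, where $w$ is the harmonic function in $B$, in the appropriate symmetry class, solving the linearized problem
\begin{equation*}
\De w=0 \mbox{ in }B, \quad \pa_{x_N}w=\Phi(v,g) \mbox{ on }\cS,
\end{equation*}
and in case (ii) further normalized by $w=0$ on $\cE$ to take care of the uniqueness defect. A fixed point of $T$ yields a solution of \eqref{backus-problem}. Two ingredients are needed to run the argument in H\"older spaces: (a) a linear Schauder-type estimate $|w|_{1+\al,\ol{B}}\le C\,|\Phi(v,g)|_{\al,\ol{B}}$ valid within each of the two symmetry classes; (b) the standard quadratic smallness of the nonlinearity. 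For (b), since $C^{\al}(\ol{B})$ is an algebra and since $f$ is affine,
\begin{equation*}
\bigl||\na v_1|^2-|\na v_2|^2\bigr|_{\al,\ol{B}}\le C\bigl(|v_1|_{1+\al,\ol{B}}+|v_2|_{1+\al,\ol{B}}\bigr)|v_1-v_2|_{1+\al,\ol{B}},
\end{equation*}
and $|\Phi(0,g)|_{\al,\ol{B}}\le C\,|g-1|_{1+\al,\ol{B}}$. Granted (a), choosing $\de_0$ sufficiently small makes $T$ a contraction of the ball $\{|v|_{1+\al,\ol{B}}\le 2C\de_0\}$ into itself; the unique fixed point satisfies the estimate in the statement.

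The main obstacle is thus (a). The linearized oblique derivative problem with vector field $e_N$ is of the same irregular type as \eqref{oblique-dipole}: the field $e_N$ is tangential to $\cS$ precisely on the equator $\cE$, and in the full class of harmonic functions this causes a loss of roughly half a derivative. The crucial point here is that the bad modes responsible for this loss are exactly those excluded from $C^{1+\al}_{\rm odd}(\ol{B})$ and $C^{1+\al}_{\rm ax}(\ol{B})$. Establishing (a) will therefore require decomposing $w$ along the spherical harmonics adapted to the $x_N$-axis, deriving pointwise \emph{weighted} H\"older bounds encoding the behavior of $w$ near $\cE$, and verifying that---on the symmetric subspaces---these weighted estimates upgrade to the plain bound $|w|_{1+\al,\ol{B}}\le C\,|\Phi|_{\al,\ol{B}}$. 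This weighted Schauder analysis for the linearized problem is the substantive work; once it is in place, the contraction argument above closes the theorem in a few routine steps.
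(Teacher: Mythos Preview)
Your fixed-point scheme and the reduction to the linearized oblique problem are correct, but the key linear ingredient you call (a) is not the right target, and this is where the argument fails to close. The estimate $|w|_{1+\al,\ol{B}}\le C\,|\Phi|_{\al,\ol{B}}$ would amount to a \emph{gain of one full derivative} for the irregular problem \eqref{oblique-laminar}--\eqref{equator}, and this is not what holds in H\"older spaces, even on the symmetric subspaces. The paper's linear estimate (Theorem~\ref{prop:estimate-oblique-interior0}) is instead
\[
|v|_{1+\al,\ol{B}} +|\pa_{x_N} v|_{1+\al,\ol{B}} +|x_N D^2_{x'} v|_{\al,\ol{B}}
 \le C\,|\fhi|_{1+\al,\ol{B}},
\]
valid for \emph{all} data $\fhi$, with no symmetry assumption and no gain over the data. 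Your plan to recover (a) from weighted bounds will run into the obstruction that, for instance, $|\na_{x'}W(x)|$ can behave like $(1-|x'|^2)^{\al-1/2}$ near $\cE$ when $\fhi$ is merely $C^\al$, and nothing in the odd or axial class cancels this.

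The paper therefore runs the contraction not on $v$ but on the boundary datum $\fhi=\pa_{x_N}v$, at the $C^{1+\al}$ level throughout, so the issue becomes showing that the nonlinearity $|\na v|^2$ lies in $C^{1+\al}$ (not just $C^\al$). This is exactly where the symmetry enters: for $v$ odd in $x_N$ one has $v=x_N\,\om$ with $\om\in C^{1+\al}$ (Lemma~\ref{lem:om-estimate0}), whence for $i,j<N$
\[
\pa_{x_j}(\pa_{x_i}v)^2
=2(\pa_{x_i}\om)\,(x_N\,\pa^2_{x_ix_j}v)\in C^\al(\ol{B}),
\]
using the weighted term $|x_N D^2_{x'}v|_{\al,\ol{B}}$ from the linear estimate; the remaining derivatives are handled by $\pa_{x_N}v\in C^{1+\al}$. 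In the axial case an analogous device (the cut-off operator $\cJ$ in Proposition~\ref{lem:tT-estimates}) plays the same role. So the ``weighted H\"older bounds'' you anticipate are indeed the heart of the matter, but they are used to upgrade the \emph{nonlinear} term $|\na v|^2$ from $C^\al$ to $C^{1+\al}$, not to manufacture the Schauder gain in the linear problem. With your present (a)--(b) splitting, step (b) only places $|\na v|^2$ in $C^\al$ and step (a) cannot lift it back, so the iteration loses a derivative at each turn.
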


Note that the solution $u$ obtained in (i) of Theorem~\ref{thm:existence0} 
automatically satisfies the condition $u=0$ on $\cE$,
since it is odd in the variable $x_N$; 
while in (ii) we need to impose the additional boundary condition on $\cE$, due to the fact that \eqref{backus-problem} is invariant under the addition of constants. 
\par
It is worthwhile noticing at this point that, even if the height $f$ and the dipole $d$ are the Kelvin trasformations of one another, $\cK$ maps the linearized exterior problem \eqref{oblique-dipole} into the  interior oblique derivative problem:
$$
\De v=0 \ \mbox{ in } \ B, \quad \pa_{x_N} v +(N-2)\,f\,\pa_{\nu} v+(N-1)(N-2)\,f\,v=\fhi \ \mbox{ on } \ \cS.
$$
This problem is more difficult to treat, even if the relevant vector field governing it has the same qualitative properties of $e_N$. A study of this problem will be the theme of future work.
\par
The proof of Theorem \ref{thm:existence0} hinges on some a priori estimates for the solution of the linearized problem \eqref{oblique-laminar} subject to the Dirichlet-type condition
\begin{equation}
\label{equator}
v=\psi \ \mbox{ on } \ \cE.
\end{equation}
Thus, in Section \ref{sec:alimov}, we will first derive an explicit representation formula for the solutions of \eqref{oblique-laminar}-\eqref{equator}. We will also adapt a couple of lemmas in \cite{Al} to our purposes, by making explicit the dependence of the relevant norms of $v$ on those of the data $\fhi$.
Then, in Section \ref{sec:a-priori-estimates}, we shall derive crucial a priori estimates for the linearized problem \eqref{oblique-laminar}-\eqref{equator} (see Theorem \ref{prop:estimate-oblique-interior0}). 
Finally, in Section \ref{sec:backus}, based on these estimates, we will carry out the proof of Theorem \ref{thm:existence0}.
\section{An explicit integral representation formula \\ for the oblique derivative problem}
\label{sec:alimov}
In this section, we carry out explicit computations which lead to the construction of an integral representation formula for the linearized problem \eqref{oblique-laminar}-\eqref{equator}. We follow the scheme introduced in \cite{Al}.

\subsection{Uniqueness for problem \eqref{oblique-laminar}-\eqref{equator}}

Let a function space $C^1_N(\ol{B})$ be defined by
\begin{equation*}
C^1_N(\ol{B})=\{ v \in C(\ol{B}): \pa_{x_N} v \mbox{ exists and } \pa_{x_N} v \in C(\ol{B})\}.
\end{equation*}
Then $C^1_N(\ol{B})\cap C^2(B)$ is one of the natural spaces 
for solutions of \eqref{oblique-laminar}-\eqref{equator}.
We show that a uniqueness result holds in this space.
The argument follows the lines of one used in \cite{JM} and is based on Hopf's boundary lemma. 
\begin{prop}[Uniqueness]
\label{prop:uniqueness}
For any given $\fhi\in C(\cS)$ and $\psi\in C(\cE)$, 
the problem \eqref{oblique-laminar}-\eqref{equator} has at most one solution 
of class $C^1_N(\ol{B})\cap C^2(B)$.
\end{prop}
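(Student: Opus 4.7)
The plan is to reduce to uniqueness for the homogeneous problem by linearity, and then to rule out a nontrivial solution via Hopf's boundary point lemma applied in an oblique inward direction. Concretely, if $v_1, v_2 \in C^1_N(\ol{B})\cap C^2(B)$ both solve \eqref{oblique-laminar}-\eqref{equator} with the same data, their difference $v := v_1 - v_2$ is harmonic in $B$, satisfies $\pa_{x_N} v = 0$ on $\cS$, and vanishes on $\cE$; the goal is to show $v \equiv 0$.

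I would argue by contradiction. Assume $v \not\equiv 0$; replacing $v$ by $-v$ if necessary, I may further assume $M := \max_{\ol{B}} v > 0$. By the weak maximum principle for harmonic functions, $M$ is attained at some $x_0 \in \cS$. Since $v$ vanishes on $\cE$ while $M > 0$, the function $v$ is nonconstant and $x_0 \notin \cE$; up to the symmetry $x_N \mapsto -x_N$, I may take $x_0$ in the open upper hemisphere $\cS^+ := \{x \in \cS : x_N > 0\}$. At such $x_0$ the outward unit normal to $B$ is $\nu(x_0) = x_0$, so $-e_N \cdot \nu(x_0) = -(x_0)_N < 0$; in particular, $-e_N$ points strictly into $B$ at $x_0$.

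The crucial step is Hopf's boundary lemma applied in the oblique direction $-e_N$. Since $B$ satisfies the interior ball condition at $x_0$, $v$ is harmonic and nonconstant with a boundary maximum at $x_0$, and $-e_N \cdot \nu(x_0) < 0$, the directional form of Hopf's lemma gives
\[
\liminf_{t \to 0^+} \frac{v(x_0) - v(x_0 - t e_N)}{t} > 0.
\]
By the $C^1_N(\ol{B})$ regularity, the left-hand side equals $\pa_{x_N} v(x_0)$, contradicting $\pa_{x_N} v = 0$ on $\cS$. The case $x_0 \in \cS^-$ is symmetric with $+e_N$, and this exhausts the possibilities, proving $v \equiv 0$. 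The chief potential obstacle is to ensure that the oblique directional version of Hopf's lemma is in force: the direction $\pm e_N$ degenerates to tangential precisely on $\cE$, which is exactly where the boundary condition $\pa_{x_N} v = 0$ also loses information and where the extra Dirichlet datum is required; away from $\cE$, however, $\pm e_N$ is strictly inward, and the classical Hopf argument based on the interior ball and monotonicity of $v$ along an inward segment applies directly within the regularity class $C^1_N(\ol{B}) \cap C^2(B)$.
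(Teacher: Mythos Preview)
Your proof is correct and follows essentially the same route as the paper: reduce by linearity to the homogeneous problem, locate a boundary extremum away from the equator via the maximum principle and the condition $v=0$ on $\cE$, and apply Hopf's boundary point lemma in the direction $\pm e_N$, which is strictly transversal to $\cS$ off the equator, to contradict $\pa_{x_N}v=0$ on $\cS$. The only cosmetic differences are that the paper treats the positive maximum and negative minimum separately rather than replacing $v$ by $-v$, and that you phrase Hopf's conclusion via the $\liminf$ of difference quotients before invoking the $C^1_N(\ol{B})$ regularity.
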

\begin{proof}
Let $v_1$ and $v_2$ be two solutions of class $C^1_N(\ol{B})\cap C^2(B)$ of \eqref{oblique-laminar}-\eqref{equator}, and set $v=v_1-v_2$. Then $v$ solves \eqref{oblique-laminar}-\eqref{equator} with $\fhi \equiv 0$ and $\psi \equiv 0$. If $v(x)$ is a (positive) maximum for $v$ on $\ol{B}$, then we have that $x\in\cS$, by the maximum principle, and $x\notin\cE$, being as $v(x)>0$.
\par
Now, if $x$ were in the upper hemisphere of $\cS$, then we would have that $\partial_{x_N} v(x)>0$, by Hopf's boundary lemma, since $e_N$ is an outward direction on that hemisphere. This is a contradiction. By a similar argument, we infer that $x$ cannot belong to the lower hemisphere of $\cS$. Thus, we conclude that $v\le 0$ on $\ol{B}$. By considering the minimum of $v$, we can infer that $v\ge 0$ on $\ol{B}$, and hence $v\equiv 0$ on $\ol{B}$.
\end{proof}

We can easily use this proposition to infer that the solution $v$ of  \eqref{oblique-laminar}-\eqref{equator} inherits possible symmetries of the data $\fhi$ and $\psi$. 
For instance, if $\fhi$ and $\psi$ are axially symmetric around $x_N$-axis, then $v$ is so.

\subsection{Estimates for the Dirichlet problem for the Laplace equation}

In the next result,
we derive estimates for solutions of the Dirichlet problem in $B$:
\begin{equation}
\label{Laplace-Dirichlet}
\De w=0 \ \mbox{ in } \ B, \quad w=\fhi \ \mbox{ on } \ \cS.
\end{equation}
It is well-known that 
if $\fhi\in C(\cS)$, this problem has the unique solution $w \in C(\ol{B})\cap C^2(B)$
given by the \textit{Poisson integral formula}:
\begin{equation}
\label{def-w}
w(x)=\int_\cS P_B(x;y)\,\fhi(y)\,dS_y.
\end{equation}
Here, $P_\Om$ stands for the Poisson kernel for a bounded domain $\Om$.
In particular, $P_B$ is explicitly given by
\begin{equation*}
P_B(x;y)=\frac1{N\om_N}  \frac{1-|x|^2}{|x-y|^N} \ \mbox{ for } \ x\in B, \ y\in\cS, 
\end{equation*}
where $\om_N$ is the volume of $B$. 
For our aims, we need the following refinement of \cite[Lemma 2.2]{Al}.

\begin{prop}\label{lem:estimate-derivative-w}
Suppose that $\fhi\in C^{k+\al}(\ol{B})$ 
for some non-negative integer $k$ and $\al \in [0,1)$. 
Let $\be$ be a multi-index with $|\be|>k+\al$.
\par
If $w$ is the solution of \eqref{Laplace-Dirichlet}, 
then there exists a positive constant $C=C(N,\be,k,\al)$ such that 
\begin{equation}
\label{derivatives-growth}
|D^\be w(x)| \le C\,|\fhi|_{k+\al,\ol{B}}\,(1-|x|^2)^{-|\be|+k+\al}
\end{equation}
for all $x \in B$.
\end{prop}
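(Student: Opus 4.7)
The strategy is to combine the boundary regularity of the Dirichlet problem with the classical interior derivative estimate for harmonic functions, applied with the natural scaling $r\sim \mathrm{dist}(x,\cS)$. First, I would invoke boundary Schauder theory for the Laplace equation on the unit ball: since $\cS$ is smooth and $\fhi|_\cS\in C^{k+\al}(\cS)$ whenever $\fhi\in C^{k+\al}(\ol B)$, the solution $w$ of \eqref{Laplace-Dirichlet} belongs to $C^{k+\al}(\ol B)$ and satisfies $|w|_{k+\al,\ol B}\le C\,|\fhi|_{k+\al,\ol B}$. (When $\al=0$, the analogous bound can be deduced directly from the Poisson integral \eqref{def-w} by subtracting a Taylor polynomial, as in \cite{Al}.)

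Next, I would fix $x_0\in B$, set $r=(1-|x_0|)/2$ so that $B_{2r}(x_0)\subset B$, and decompose the multi-index as $\be=\de+\eta$ with $|\de|=k$ and $|\eta|=|\be|-k$. Since $|\be|>k+\al\ge k$ with $|\be|\in\NN$ and $\al<1$, we have $|\eta|\ge 1$. The function $u:=D^\de w$ is harmonic in $B_r(x_0)$ with $[u]_{\al,\ol{B_r(x_0)}}\le |w|_{k+\al,\ol B}\le C|\fhi|_{k+\al,\ol B}$. The classical derivative estimate for harmonic functions, obtained by applying the divergence theorem to $u-u(x_0)$ and iterating in $|\eta|$, yields
\begin{equation*}
|D^\eta u(x_0)|\le C(N,\eta,\al)\,r^{\al-|\eta|}\,[u]_{\al,\ol{B_r(x_0)}},
\end{equation*}
and combining these two estimates produces $|D^\be w(x_0)|\le C\,r^{\al+k-|\be|}\,|\fhi|_{k+\al,\ol B}$. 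Since $r=(1-|x_0|)/2$ and $1-|x_0|^2\le 2(1-|x_0|)=4r$, while the exponent $k+\al-|\be|$ is negative, we conclude $r^{k+\al-|\be|}\le C(\be,k,\al)\,(1-|x_0|^2)^{k+\al-|\be|}$, giving \eqref{derivatives-growth}.

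The main obstacle is the boundary regularity step, and in particular cleanly handling the endpoint $\al=0$. Once the boundary estimate is in place, the scaling argument is entirely standard. An alternative proof, which bypasses Schauder theory and works uniformly in $\al\in[0,1)$, is to estimate $D^\be w(x_0)$ directly from the Poisson integral by subtracting the degree-$k$ Taylor polynomial $T_{x_0}$ of $\fhi$ at the nearest boundary point $\bar x_0=x_0/|x_0|$. The harmonic extension of a polynomial of degree $\le k$ restricted to $\cS$ is itself a polynomial of degree $\le k$ (by Almansi's decomposition), so its $\be$-th derivative vanishes whenever $|\be|>k$. Combined with the pointwise bounds $|D_x^\be P_B(x;y)|\le C|x-y|^{-(N-1)-|\be|}$ and $|\fhi(y)-T_{x_0}(y)|\le C|\fhi|_{k+\al,\ol B}|y-\bar x_0|^{k+\al}$, and the auxiliary integral estimate $\int_\cS |x-y|^{-(N-1)-s}\,dS_y\le C(1-|x|^2)^{-s}$ valid for $s>0$ (applied with $s=|\be|-k-\al$), this produces \eqref{derivatives-growth} directly. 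This is essentially the scheme of \cite[Lemma~2.2]{Al}.
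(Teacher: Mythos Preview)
Your ``alternative proof'' is essentially the paper's own argument: Taylor-expand $\fhi$ at $\bar x_0=x_0/|x_0|$, differentiate under the Poisson integral, and bound the remainder via the kernel estimates of Lemmas~\ref{prop:derivatives-P} and~\ref{lem:integral-P}. One small improvement over the paper: you note that the harmonic extension of a polynomial of degree $\le k$ restricted to $\cS$ is again a polynomial of degree $\le k$, so $D^\be h_\ga\equiv 0$ for $|\be|>k$; the paper instead invokes the Schauder estimate of Lemma~\ref{lem:estimate-Poisson-integral} (applied to the smooth data $(\cdot-x_0)^\ga$) merely to bound $|D^\be h_\ga|$ by a constant, which is correct but less sharp.

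Your main approach --- global boundary regularity $|w|_{k+\al,\ol B}\le C\,|\fhi|_{k+\al,\ol B}$ followed by the interior derivative estimate on a ball of radius $r\sim 1-|x_0|$ --- is a genuinely different route. For $\al\in(0,1)$ it works: the boundary bound holds by Kellogg's theorem and its higher-order analogues, and the scaled interior estimate for harmonic functions does the rest. But your parenthetical treatment of $\al=0$ contains a real gap. The bound $|w|_{k,\ol B}\le C\,|\fhi|_{k,\ol B}$ is \emph{false} in general for $k\ge 1$: this is precisely the integer-exponent breakdown of Kellogg/Schauder theory. Already in two dimensions, $\fhi\in C^1(\cS)$ does not force $\na w$ to be bounded up to $\cS$, since the normal derivative on $\cS$ is the conjugate function of the tangential derivative $\fhi'$, which need not be bounded. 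Subtracting a Taylor polynomial does not salvage this intermediate $C^k$ bound --- what it salvages is the \emph{final} estimate \eqref{derivatives-growth} directly, and that is your alternative route and the paper's. So for $\al=0$ with $k\ge 1$, only the Taylor-subtraction argument survives.
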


To prove Proposition~\ref{lem:estimate-derivative-w},
we first recall the following a priori estimate for the Laplace equation.
For the proofs, see for instance \cite[Theorem~6.6, Problem~6.2]{GT}.
\begin{lem}\label{lem:estimate-Poisson-integral}
Suppose that $\fhi$ is of class $C^{k+\al}(\ol{B})$ 
for some integer $k \ge 2$ and $\al \in (0,1)$. 
Then the solution $w$ of \eqref{Laplace-Dirichlet} satisfies the inequality
\begin{equation*}
|w|_{k+\al,\ol{B}} \le C|\varphi|_{k+\al,\ol{B}}
\end{equation*}
for some positive constant $C=C(N,k,\al)$.
\end{lem}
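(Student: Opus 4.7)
The plan is to reduce the inhomogeneous Dirichlet problem \eqref{Laplace-Dirichlet} to one with zero boundary data (to which global Schauder estimates apply) by subtracting off the data $\fhi$, and then to invoke classical global Schauder theory on the smooth domain $B$. The assumption $k\ge 2$ is exactly what is needed for $\Delta\fhi$ to make sense as a H\"older continuous function, and hence for the reduction to be effective.

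Concretely, I would set
\begin{equation*}
u=w-\fhi,
\end{equation*}
so that $u\in C(\ol{B})\cap C^2(B)$ satisfies
\begin{equation*}
\De u = -\De\fhi \ \mbox{ in } \ B, \quad u=0 \ \mbox{ on } \ \cS.
\end{equation*}
Since $\fhi\in C^{k+\al}(\ol{B})$ with $k\ge 2$, the right-hand side $-\De\fhi$ belongs to $C^{k-2+\al}(\ol{B})$, with $|\De\fhi|_{k-2+\al,\ol{B}}\le C\,|\fhi|_{k+\al,\ol{B}}$ by the definition of the H\"older norm.

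Next, I would apply the global Schauder estimate for the Dirichlet problem on the smooth (in fact $C^\infty$) domain $B$ with vanishing boundary trace. For $k=2$ this is \cite[Theorem 6.6]{GT}, and the extension to arbitrary $k\ge 2$ is \cite[Problem 6.2]{GT}. These yield
\begin{equation*}
|u|_{k+\al,\ol{B}} \le C\,\bigl( |u|_{0,\ol{B}} + |\De\fhi|_{k-2+\al,\ol{B}} \bigr)
\end{equation*}
for a constant $C=C(N,k,\al)$. The maximum principle gives $|u|_{0,\ol{B}} = |w-\fhi|_{0,\ol{B}}\le 2\,|\fhi|_{0,\ol{B}}$, so combining we obtain
\begin{equation*}
|u|_{k+\al,\ol{B}} \le C\,|\fhi|_{k+\al,\ol{B}}.
\end{equation*}

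Finally, the triangle inequality $|w|_{k+\al,\ol{B}}\le |u|_{k+\al,\ol{B}}+|\fhi|_{k+\al,\ol{B}}$ delivers the desired conclusion. The only genuine subtlety is citing the correct version of the Schauder estimate for general $k$ rather than just $k=2$; since the boundary $\cS$ and the data problem have no low-regularity obstructions, no further ingredients beyond the classical ones in \cite{GT} are needed.
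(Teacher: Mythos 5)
Your argument is correct and is essentially the paper's own approach: the paper gives no proof beyond citing \cite[Theorem~6.6, Problem~6.2]{GT}, and your reduction to zero boundary data followed by the global Schauder estimate (with the maximum principle controlling the zeroth-order term) is exactly the standard way to unfold that citation. No gaps.
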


Next, we derive a pointwise estimate for the Poisson kernel.
\begin{lem}
\label{prop:derivatives-P}
For any multi-index $\be$, it holds that 
$$
D^\be_x P_B(x;y)=\frac{a_\be(x,y)}{|x-y|^{|\be|+N-1}} \ \mbox{ for } \ x\in B, \ y\in \cS,
$$
where $|a_\be(x,y)|\le C_*$ for some positive constant $C_*$ which only depends on $N$ and $\be$.
\end{lem}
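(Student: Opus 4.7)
The approach I would take is to write $P_B$ as the product of two factors,
$P_B(x;y)=(1-|x|^2)\cdot (N\om_N|x-y|^N)^{-1}$, and apply the Leibniz rule.
The first factor is a quadratic polynomial in $x$, so only those multi-indices $\ga$ with $|\ga|\le 2$ in the Leibniz expansion contribute. The second factor is smooth off the singular set $\{x=y\}$, and its derivatives can be controlled by elementary homogeneity considerations. The glue that makes the final power come out exactly to $|x-y|^{-(|\be|+N-1)}$, rather than one unit worse, is the elementary boundary inequality $1-|x|^2\le 2|x-y|$, valid for $x\in B$ and $y\in\cS$: it follows from $1=|y|\le |x|+|x-y|$ combined with $1+|x|\le 2$.

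First, I would establish an auxiliary identity for the Newtonian-type kernel: for every multi-index $\ga$,
$$
D^\ga_x |x-y|^{-N}=\frac{Q_\ga(x-y)}{|x-y|^{N+2|\ga|}},
$$
where $Q_\ga$ is a polynomial homogeneous of degree $|\ga|$ in its argument. A direct induction on $|\ga|$ confirms this: each $\pa_{x_i}$ applied to such an expression either lowers the degree of the numerator by one (leaving the denominator power unchanged) or multiplies the numerator by $(x_i-y_i)$ and the denominator by $|x-y|^2$; either option preserves the stated form with $|\ga|$ increased by one. Homogeneity then gives $|Q_\ga(z)|\le C|z|^{|\ga|}$, hence the pointwise bound $|D^\ga_x|x-y|^{-N}|\le C_\ga/|x-y|^{N+|\ga|}$.

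Applying Leibniz's rule to $D^\be_x P_B(x;y)$ and splitting the sum according to $|\ga|\in\{0,1,2\}$ (higher $|\ga|$ contributes zero), I would examine each case. For $\ga=0$, the factor $1-|x|^2$ multiplies a term of order $|x-y|^{-(N+|\be|)}$; the cancellation $1-|x|^2\le 2|x-y|$ brings this down to the required order $|x-y|^{-(N+|\be|-1)}$. For $|\ga|=1$, the factor $|D^\ga(1-|x|^2)|\le 2$ multiplies a term of order $|x-y|^{-(N+|\be|-1)}$ directly. For $|\ga|=2$, the numerator is bounded while the kernel derivative is of order $|x-y|^{-(N+|\be|-2)}$, and the missing factor of $|x-y|$ is absorbed by the uniform bound $|x-y|\le 2$ on $\ol{B}\times\cS$. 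Multiplying throughout by $|x-y|^{|\be|+N-1}$ and setting $a_\be(x,y)=|x-y|^{|\be|+N-1}D^\be_x P_B(x;y)$ yields the representation with $|a_\be|\le C_*(N,\be)$.

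The argument is essentially Leibniz-rule bookkeeping, and the only non-routine input is the elementary boundary inequality $1-|x|^2\le 2|x-y|$, which trades a power of $(1-|x|^2)$ for a power of $|x-y|$ in the most singular term. I do not anticipate any genuine obstacle; the only care required is to keep track of the three cases and confirm that the homogeneity argument for $Q_\ga$ indeed propagates through the induction.
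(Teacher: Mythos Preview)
Your proof is correct, but it takes a different route from the paper's. You use the Leibniz rule on the product $(1-|x|^2)\cdot |x-y|^{-N}$ and then invoke the boundary inequality $1-|x|^2\le 2|x-y|$ to recover one power of $|x-y|$ in the most singular term. The paper instead exploits the algebraic identity $1-|x|^2=|y|^2-|x|^2=-(x-y)\cdot(x+y)$ (valid since $|y|=1$) to rewrite $P_B(z+y;y)$, with $z=x-y$, as the sum of a $(2-N)$-homogeneous and a $(1-N)$-homogeneous function of $z$; differentiation then preserves homogeneity, and factoring out $|z|^{1-N-|\be|}$ leaves a bounded remainder. The paper's decomposition is slightly slicker in that the cancellation is built in structurally (no separate inequality is needed), and it yields an explicit expression for $a_\be$ as a sum of two homogeneous pieces evaluated on the sphere. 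Your Leibniz-rule argument, on the other hand, is more mechanical and requires no algebraic insight beyond the single inequality $1-|x|^2\le 2|x-y|$; it would also adapt more readily to kernels where such an exact factorization is unavailable.
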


\begin{proof}
We have that 
$$
P_B(x;y)=\frac1{N\om_N}  \frac{1-|x|^2}{|x-y|^N}=\frac1{N\om_N}  \frac{|y|^2-|x|^2}{|x-y|^N}=\frac1{N\om_N}  \frac{(x-y)\cdot(x+y)}{|x-y|^N}.
$$
Hence, if we set $z=x-y$, we obtain that
$$
P_B(z+y;y)=\frac1{N\om_N}  \frac1{|z|^{N-2}}+\frac2{N\om_N}\,\frac{y\cdot z}{|z|^N}.
$$
This function of $z$ is the sum of a $(2-N)$-homogeneous and a $(1-N)$-homogeneous function. Thus, we infer that 
$$
D^{\be}_zP_B(z+y;y)=A(z; y)+B(z; y),
$$ 
where $A(z; y)$ and $B(z; y)$ are homogeneous of degree $2-N-|\be|$ and $1-N-|\be|$ in $z$. As a consequence, we get:
$$
A(z; y)+B(z; y)=|z|^{1-N-|\be|}\bigl[|z|\,A(z/|z|;y)+B(z/|z|;y)\bigr].
$$
The function in the brackets is bounded since both $z/|z|$ and $y$ have a unitary norm and $z\in 2B$. Therefore, we conclude by setting
$$
a_\be(x,y)=|x-y|\,A\left(\frac{x-y}{|x-y|};y\right)+B\left(\frac{x-y}{|x-y|};y\right),
$$
for $x\in B$ and $y\in\cS$.
\end{proof}

We also need the following bound.
\begin{lem}
\label{lem:integral-P}
Let a multi-index $\be$ and a nonnegative number $\ka$ satisfy $|\be|>\ka$.
Then, there exists a positive constant $C=C(N,\be,\ka)$ such that
$$
\int_{\cS} |D^\be_x P_B(x;y)| \, |y-x_0|^\ka \, dS_y \le C(1-|x|)^{-|\be|+\ka},
$$
for all $x \in B\setminus\{ 0\}$, where $x_0=x/|x| \in \cS$. When $x=0$, we can choose $x_0$ to be any point in $\cS$.
\end{lem}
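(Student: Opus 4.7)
The plan is to reduce to the pointwise bound $|D^\be_x P_B(x;y)|\le C_*|x-y|^{-(|\be|+N-1)}$ furnished by Lemma \ref{prop:derivatives-P}, and then carry out the resulting spherical integration in polar coordinates centred at $x_0$. The geometric heart of the argument is the identity
\begin{equation*}
|x-y|^2 = (1-|x|)^2 + |x|\,|x_0-y|^2, \qquad x\in B\setminus\{0\},\ y\in\cS,
\end{equation*}
which follows at once from $x=|x|\,x_0$, $|x_0|=|y|=1$, by expanding both sides. This identity delivers the comparison
\begin{equation*}
|x-y| \ge c_N\bigl\{(1-|x|) + |x_0-y|\bigr\},
\end{equation*}
valid for $|x|\ge 1/2$, with $c_N>0$ depending only on $N$ (the implication $a^2+b^2\ge (a+b)^2/2$).

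The case $|x|\le 1/2$ is essentially trivial: then $|x-y|\ge 1/2$, the integrand is uniformly bounded, and $(1-|x|)^{-|\be|+\ka}$ is comparable to $1$. This also covers the prescription $x=0$, where $x_0\in\cS$ may be chosen arbitrarily. So I would dispatch this regime in a single line and focus on $|x|\ge 1/2$.

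For $|x|\ge 1/2$, set $r:=1-|x|\in(0,1/2]$ and parametrise $\cS$ around $x_0$ by $y=x_0\cos\te+\om\sin\te$ with $\om\in\cS\cap x_0^\perp$ and $\te\in[0,\pi]$; then $|y-x_0|=2\sin(\te/2)\le \te$ and $dS_y=\sin^{N-2}\te\,d\te\,d\si(\om)$, with $\sin^{N-2}\te\le \te^{N-2}$. Combining Lemma \ref{prop:derivatives-P} with the geometric comparison above reduces the integral to estimating, up to constants depending only on $N$,
\begin{equation*}
\int_0^{\pi} \frac{\te^{\ka+N-2}}{(r+\te)^{|\be|+N-1}}\,d\te.
\end{equation*}
The change of variable $\te=rt$ produces the factor $r^{\ka-|\be|}$ times $\int_0^{\pi/r}t^{\ka+N-2}(1+t)^{-(|\be|+N-1)}\,dt$, whose finite limit as the upper endpoint tends to $\infty$ is ensured precisely by the hypothesis $|\be|>\ka$, since the integrand decays like $t^{\ka-|\be|-1}$ at infinity. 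This yields the required bound $C(1-|x|)^{-|\be|+\ka}$.

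I do not anticipate any serious obstacle: the only step requiring a moment of thought is the derivation of the identity for $|x-y|^2$ and its consequence $|x-y|\gtrsim (1-|x|)+|x_0-y|$; everything else is a standard scaling argument, and the sharpness of the assumption $|\be|>\ka$ is visible in the convergence of the scaled integral at infinity.
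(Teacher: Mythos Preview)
Your proof is correct and follows essentially the same approach as the paper: both arguments reduce via Lemma~\ref{prop:derivatives-P} to the geometric comparison $|x-y|\gtrsim (1-|x|)+|y-x_0|$ and then scale out the factor $(1-|x|)^{-|\be|+\ka}$, leaving a uniformly bounded integral whose convergence is exactly the condition $|\be|>\ka$. The only cosmetic differences are that the paper obtains the comparison for all $x\in B\setminus\{0\}$ in one stroke via the splitting $|y-x|=\tfrac{2}{3}|y-x|+\tfrac{1}{3}|(y-x_0)-(x-x_0)|\ge \tfrac{1}{3}r+\tfrac{1}{3}|y-x_0|$ (so no separate treatment of $|x|\le 1/2$ is needed), and then rescales the sphere directly by $y=x_0+r\cR z$ rather than passing through polar coordinates in $\te$.
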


\begin{proof}
When $x=0$, the bound easily follows from Lemma~\ref{prop:derivatives-P}. Let $x \in B\setminus\{ 0\}$, $x_0=x/|x|$, 
and set $r=1-|x|$.
Then, for every $y \in \cS$, we have that
\begin{multline*}
|y-x|=\frac{2}{3}|y-x| +\frac{1}{3}|(y-x_0) -(x-x_0)| \ge
\\ 
\frac{2}{3} (|y|-|x|) +\frac{1}{3}(|y-x_0| -|x-x_0|)
 =\frac{1}{3}r +\frac{1}{3} |y-x_0|.
\end{multline*}
This with Lemma~\ref{prop:derivatives-P} shows that
\begin{multline*}
\int_{\cS} |D^\be_x P_B(x;y)| \, |y-x_0|^\ka \, dS_y
 =\int_{\cS} \frac{|a_\be(x,y)||y-x_0|^\ka}{|y-x|^{|\be|+N-1}} \, dS_y \le 
\\
3^{1-N-|\be|}C_* \int_\cS \frac{|y-x_0|^{\ka}}{(r+|y-x_0|)^{|\be|+N-1}}\,dS_y =
\\
3^{1-N-|\be|}C_* r^{-|\be|+\ka} \int_{|rz +e_N| =1}
 \frac{|z|^{\ka}}{(1+|z|)^{|\be|+N-1}}\,dS_z,
\end{multline*}
where we have used the change of variables $y=x_0+r \cR z$ 
with the orthogonal matrix $\cR$ satisfying $\cR^{-1} x_0=e_N=(0,\ldots,0,1)$.
The last integral on the right-hand side of the above inequality is bounded with respect to $r$,
because as $r \to 0$ it converges to the integral
\begin{equation*}
\int_{\RR^{N-1}} \frac{|z|^{\ka}}{(1+|z|)^{|\be|+N-1}}\,dS_z,
\end{equation*}
which is finite, being as $|\be|>\ka$.
We thus obtain the desired inequality.
\end{proof}


\begin{proof}[Proof of Proposition~\ref{lem:estimate-derivative-w}]
As usual, in this proof $C$ will denote a generic constant possibly depending on $N, \be, k$, and $\al$.
\par
Pick any point $x_0 \in \cS$.
Since $\fhi\in C^{k+\al}(\ol{B})$, we can write the following standard Taylor expansion for $\fhi$:
$$
\fhi(y)=\sum_{j=0}^k \sum_{|\ga|=j} \frac{D^\ga\fhi(x_0)}{\ga!}\,(y-x_0)^\ga+
\sum_{|\ga|=k}\frac{D^\ga\fhi(x_0+\te\,(y-x_0))-D^\ga\fhi(x_0)}{\ga!}\,(y-x_0)^\ga.
$$
Here, we use the standard conventions on the multi-index notation. Thus, integrating $\fhi(y)$ for $y\in\cS$ against $D^\be_x P_B(x;y)$ gives that
\begin{equation}\label{w-equality}
D^\be w(x)=\sum_{|\ga| \le k} \frac{D^\ga\fhi(x_0)}{\gamma !}\,D^\be h_{\ga}(x)+R_k(x),
\end{equation}
where $h_{\ga}$ is the solution of \eqref{Laplace-Dirichlet} with $\fhi=(\cdot-x_0)^\ga$ and
$$
R_k(x)= \sum_{|\ga|=k}\int_\cS D^\be_x P_B(x;y)\,\frac{D^\ga\fhi(x_0+\te\,(y-x_0))-D^\ga\fhi(x_0)}{\ga!}\,(y-x_0)^\ga\,dS_y.
$$
\par
Now, let $x_*$ be any point in $B \setminus \{0\}$ such that $x_*=|x_*|x_0$.
Then, Lemma~\ref{lem:estimate-Poisson-integral} gives that
\begin{equation}\label{h-inequality}
|D^\be h_{\ga}(x_*)| \le |D^\be h_{\ga}|_{0,\ol{B}} \le C.
\end{equation}
Moreover, if $\al \in (0,1)$, Lemma~\ref{lem:integral-P} shows that
\begin{multline}\label{Rk-inequality}
|R_k(x_*)| \le \sum_{|\ga|=k} \frac{[D^\ga \fhi]_{\al,\ol{B}}}{\ga!}
 \int_\cS |D^\be_x P_B(x_*;y)| \, |y-x_0|^{k+\al} \,dS_y \le 
\\ 
C\sum_{|\ga|=k} [D^\ga \fhi]_{\al,\ol{B}} \, (1-|x_*|)^{-|\be|+k+\al}.
\end{multline} 
This inequality is also valid for $\al=0$,
if $[D^\ga \fhi]_{\al,\ol{B}}$ is replaced by $|D^\ga \fhi|_{0,\ol{B}}$.
Plugging \eqref{h-inequality} and \eqref{Rk-inequality} into \eqref{w-equality},
then gives that 
$$
|D^\be w(x_*)|\le C\,|\fhi|_{k+\al,\ol{B}}\,(1-|x_*|)^{-|\be|+k+\al}.
$$ 
which yields \eqref{derivatives-growth}, after an update of the constant $C$.
\end{proof}


\subsection{Representation formulas for problem \eqref{oblique-laminar}-\eqref{equator}}

In order to obtain a representation formula,
we consider the Dirichlet problem
\begin{equation}
\label{eq-for-Z}
-\De_{x'} Z(x')=\pa_{x_N} w(x',0) \ \mbox{ in } \ D,
\quad Z=\psi \ \mbox{ on } \ \pa D,
\end{equation}
where $w$ is the solution of \eqref{Laplace-Dirichlet} and $D=\{ x' \in \RR^{N-1}: |x'|<1\}$.
We identify $D$ and $\pa D$ 
with the equatorial ball $\{x=(x',x_N)\in B: x_N=0\}$ and the equator $\cE$, respectively.
From Lemma~\ref{lem:integral-P},
we see that $w$ satisfies
\begin{equation*}
|\pa_{x_N} w(x',0)| \le |\fhi|_{0,\cS} \int_{\cS} |\pa_{x_N} P_B(x',0; y)| \, dS_y
 \le C\,|\fhi|_{0,\cS} \, (1-|x'|)^{-1}
\end{equation*}
for some constant $C$.
Therefore, for any $\fhi \in C(\cS)$ and $\psi \in C(\cE)$,
the existence and uniqueness of solutions of \eqref{eq-for-Z} in $C(\ol{D}) \cap C^2(D)$
are guaranteed by \cite[Theorem~4.9]{GT}.

\begin{prop}[Existence and representation formula]
\label{prop:representation0}
Suppose that $\fhi \in C(\cS)$ and $\psi \in C(\cE)$.
Let $w$ and $Z$ be the solutions of \eqref{Laplace-Dirichlet} and \eqref{eq-for-Z}, respectively, 
and set
\begin{equation}
\label{def-W}
W(x)=\int_0^{x_N} w(x',t)\,dt, \quad x=(x', x_N)\in \ol{B},
\end{equation}
where $w$ is defined in \eqref{def-w}.
Then the unique solution $v$ of class $C^1_N(\ol{B})\cap C^2(B)$ of \eqref{oblique-laminar}-\eqref{equator} is given by
\begin{equation}\label{u-representation0}
v(x)=W(x)+Z(x'), \quad x=(x', x_N)\in \ol{B}.
\end{equation}
\end{prop}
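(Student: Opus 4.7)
The plan is to verify directly that the candidate $v = W + Z$ defined by \eqref{u-representation0} satisfies each of the three requirements in \eqref{oblique-laminar}-\eqref{equator} with the claimed regularity, and then to invoke Proposition~\ref{prop:uniqueness} to conclude uniqueness. The construction is explicit, so no fixed-point or compactness argument is needed — everything boils down to the fundamental theorem of calculus, differentiation under the integral sign, and the harmonicity of $w$.

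First I would deal with the boundary conditions and the partial regularity $v \in C^1_N(\ol{B})$. Since $\fhi\in C(\cS)$, the Poisson integral $w$ lies in $C(\ol{B}) \cap C^2(B)$, while $Z \in C(\ol{D}) \cap C^2(D)$ by the cited existence result for \eqref{eq-for-Z}. Applying the fundamental theorem of calculus to \eqref{def-W} gives $\pa_{x_N} W(x) = w(x)$ for every $x \in \ol{B}$, hence $\pa_{x_N} v = w \in C(\ol{B})$, which is exactly the statement that $v\in C^1_N(\ol{B})$. Since $w=\fhi$ on $\cS$, the oblique condition $\pa_{x_N} v=\fhi$ on $\cS$ is immediate. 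Setting $x_N=0$ in \eqref{def-W} gives $W(x',0)=0$, so $v(x',0)=Z(x')=\psi(x')$ on $\cE$, which is \eqref{equator}.

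Next I would establish harmonicity in $B$. For any $x\in B$, the segment $\{(x',t): t\in[0,x_N]\}$ stays inside $B$ because $|x'|^2+t^2\le|x|^2<1$. On this segment $w$ is $C^\infty$, so differentiating \eqref{def-W} twice under the integral sign is legitimate: one obtains
\begin{equation*}
\Delta_{x'} W(x) = \int_0^{x_N}\Delta_{x'} w(x',t)\,dt
 = -\int_0^{x_N} \pa_t^2 w(x',t)\,dt
 = -\pa_{x_N} w(x',x_N) + \pa_{x_N} w(x',0),
\end{equation*}
using the harmonicity of $w$. Combining this with $\pa_{x_N}^2 W(x) = \pa_{x_N} w(x',x_N)$ gives $\Delta W(x) = \pa_{x_N} w(x',0)$. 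Since $Z$ depends only on $x'$ and satisfies $\Delta_{x'} Z(x') = -\pa_{x_N} w(x',0)$ by \eqref{eq-for-Z}, adding the two contributions yields $\Delta v = 0$ in $B$. The same differentiation-under-the-integral argument simultaneously shows $v\in C^2(B)$.

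The only point that requires any care is the differentiation under the integral sign, and the subtlety there is confined to checking that the integration path sits strictly inside $B$, where $w$ is smooth and its derivatives are locally bounded. In particular, the (known) singular behavior of $\pa_{x_N} w$ near $\cE$ — which enters the estimates for $Z$ and motivates the use of \cite[Theorem~4.9]{GT} — plays no role in the interior harmonicity computation. Once these verifications are in place, Proposition~\ref{prop:uniqueness} immediately upgrades the existence just proved to existence \emph{and} uniqueness in $C^1_N(\ol{B})\cap C^2(B)$, completing the proof.
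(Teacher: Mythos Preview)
Your argument is correct and matches the paper's proof essentially step for step: verify $\pa_{x_N}W=w$ via the fundamental theorem of calculus to get the regularity and the two boundary conditions, then compute $\Delta W=\pa_{x_N}w(x',0)$ and cancel it against $\Delta_{x'}Z$ to obtain harmonicity. Your added remarks on the integration segment staying inside $B$ and the explicit appeal to Proposition~\ref{prop:uniqueness} are welcome but do not change the approach.
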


\begin{proof}
Let $v$ be defined by \eqref{u-representation0}.
Then $v=W+Z \in C(\ol{B}) \cap C^2(B)$,
since we know that $w \in C(\ol{B}) \cap C^2(B)$ and $Z \in C(\ol{D}) \cap C^2(D)$.
Moreover, we have that $\pa_{x_N} v=w \in C(\ol{B})$,
and therefore $v \in C^1_N(\ol{B})\cap C^2(B)$.

Since $W(x',0)=0$,
we see that 
\begin{equation*}
\pa_{x_N} v=w=\fhi \ \mbox{ on } \ \cS,
\quad
v=Z=\psi \ \mbox{ on } \ \cE.
\end{equation*}
Hence the assertion follows if we show that $v$ is harmonic in $B$.
By a direct calculation,
we have that
\begin{multline*}
\De W(x)=\int_0^{x_N} \De_{x'} w(x',t)\,dt+\pa_{x_N}w(x',x_N)= \\
-\int_0^{x_N} \pa_{x_N x_N}^2w(x',t)\,dt+\pa_{x_N}w(x',x_N)=\pa_{x_N}w(x',0).
\end{multline*}
We thus infer that 
\begin{equation*}
\De v(x)=\De W(x) +\De_{x'} Z(x') =\pa_{x_N}w(x',0) -\pa_{x_N}w(x',0)=0,
\end{equation*}
as desired.
\end{proof}

Even if it is not needed in the proof of Theorem \ref{thm:existence0}, we also derive for future reference an explicit integral representation formula. 
The formula may be helpful for numerical approximations. To derive the formula, we recall that the fundamental solution $\Ga_d$ of the Laplace equation in a $d$-dimensional Euclidean 
space ($d\ge 2$) is given by
\begin{equation*}
\Ga_2(x)=\frac1{2\pi}\,\log\frac{1}{|x|}, 
\qquad
\Ga_d(x)= \frac1{d(d-2)\,\om_{d}}|x|^{2-d} \ \mbox{ if } \ d\ge 3.
\end{equation*}
Then, the Green's function for $D$ is written as
\begin{equation*}
G_D(x'; y')=\Ga_{N-1}(x'-y') -\Ga_{N-1}\left( |x'|(\cI(x')-y')\right),
\end{equation*}
where $\cI$ denotes the inversion $\cI(x')=x'/|x'|^2$ for $x'\ne 0$.
\par
If we now define the kernel
\begin{equation*}
\label{def-K-alimov}
K(x;y)=\int_0^{x_N} P_B(x',t; y)\,dt+ 
\int_D G_D(x',z')\, \pa_{x_N} P_B(z',0;y)\,dz',
\end{equation*}
for $x=(x', x_N)\in B$ and $y\in\cS$, 
the representation formula is given as follows.

\begin{prop}[Integral representation formula]
\label{th:representation}
\par
Suppose that $\fhi\in C(\cS)$ and $\psi\in C(\cE)$. Then, the function defined by 
\begin{equation}
\label{representation-alimov}
v(x)=\int_\cS K(x;y)\, \fhi(y)\,dS_y+\int_{\cE} P_D(x';y')\,\psi(y')\,dS_{y'} \ \mbox{ for } \ x\in B
\end{equation}
coincides with the unique solution of class $C^1_N(\ol{B})\cap C^2(B)$ of the problem \eqref{oblique-laminar}-\eqref{equator}.
\end{prop}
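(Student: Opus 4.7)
The plan is to match \eqref{representation-alimov} term by term against the decomposition $v=W+Z$ furnished by Proposition~\ref{prop:representation0}. Splitting the kernel as $K=K_1+K_2$, with
\begin{equation*}
K_1(x;y)=\int_0^{x_N} P_B(x',t;y)\,dt,\qquad K_2(x;y)=\int_D G_D(x',z')\,\pa_{x_N} P_B(z',0;y)\,dz',
\end{equation*}
the target identity breaks into the two pieces $W(x)=\int_\cS K_1(x;y)\,\fhi(y)\,dS_y$ and $Z(x')=\int_\cS K_2(x;y)\,\fhi(y)\,dS_y+\int_\cE P_D(x';y')\,\psi(y')\,dS_{y'}$.

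The first identity is immediate: substituting \eqref{def-w} into \eqref{def-W} and applying Fubini gives $W=\int_\cS K_1\,\fhi\,dS$, the swap being justified since for $|x|<1$ fixed the integrand $P_B(x',t;y)$ is continuous on the compact region $(t,y)\in[0,x_N]\times\cS$. For the second identity, I would invoke the Green's representation for \eqref{eq-for-Z},
\begin{equation*}
Z(x')=\int_D G_D(x',z')\,\pa_{x_N} w(z',0)\,dz'+\int_\cE P_D(x';y')\,\psi(y')\,dS_{y'},
\end{equation*}
and substitute the definition of $\pa_{x_N} w$ before swapping integrals once more to recast the volume term as $\int_\cS K_2\,\fhi\,dS$.

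The main technical hurdle is the lack of boundedness of $F(z'):=\pa_{x_N} w(z',0)$: Lemma~\ref{lem:integral-P} yields only the singular bound $|F(z')|\le C\,|\fhi|_{0,\cS}\,(1-|z'|)^{-1}$, so neither the Green's representation nor the second Fubini is directly covered by standard formulations that assume $F$ bounded on $\ol{D}$. I would circumvent this by exhausting $D$ with the balls $D_\ve=\{|z'|<1-\ve\}$, on which $F$ is bounded and the classical Green's identity applies. The passage to the limit $\ve\to 0^+$ would be controlled via: (i) the boundary decay $G_D(x',z')\lesssim 1-|z'|$ near $\pa D$ (a consequence of Hopf's lemma applied to the harmonic function $z'\mapsto G_D(x',z')$ vanishing on $\pa D$), which cancels the singular factor in $F$ and yields both the integrability of the volume integral and the absolute integrability required for Fubini; (ii) the monotone convergence $G_{D_\ve}\nearrow G_D$ together with the standard convergence $P_{D_\ve}\to P_D$ on compact subsets of $D$; and (iii) classical uniqueness for the Dirichlet problem on $D$ with continuous data, which identifies the limit with $Z$.
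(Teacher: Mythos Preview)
Your proposal is correct and follows the same decomposition as the paper: split $K=K_1+K_2$, identify $\int_\cS K_1\fhi$ with $W$ by a straightforward Fubini, and identify $\int_\cS K_2\fhi+\int_\cE P_D\psi$ with $Z$ via the Green's representation for \eqref{eq-for-Z} followed by a second Fubini. The only difference is in how the singular right-hand side is handled: where you propose an exhaustion by $D_\ve$ to justify both the Green's formula and the second Fubini, the paper simply cites the Green's representation as well-known and verifies the Fubini step directly by Tonelli, checking that $\fhi(y)\,\pa_{x_N}P_B(z',0;y)\,G_D(x';z')\in L^1(D\times\cS)$ using exactly the boundary decay of $G_D$ and Lemma~\ref{lem:integral-P} (with $\ka=0$, $|\be|=1$) that you invoke in your point (i).
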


\begin{proof}
By the well-known representation formula for the Dirichlet problem for the Poisson equation,  we know that
$$
Z(x')=\int_D G_D(x';z')\,\pa_{x_N}w(z',0)\,dz'+\int_\cE P_D(x';z')\,\psi(z')\,dS_{z'}.
$$
With the definition of $w$ in mind, by the Fubini theorem we then infer that
$$
\int_D G_D(x';z')\,\pa_{x_N}w(z',0)\,dz'=\int_S \left[\int_D G_D(x';z')\,\pa_{x_N}P_B(z',0;y)\,dz'\right]\fhi(y)\,dS_y.
$$
Being as $x'\in D$, the Fubini theorem is applicable in this formula, because the function $F$ defined a.e. on $D\times\cS$ by  $F(z',y)=\fhi(y)\,\pa_{x_N}P_B(z',0;y)\,G_D(x';z')$ is in $L^1(D\times\cS)$. In fact, we have that
$$
\int_{D\times\cS}|F(z',y)|(dz'\times dS_y)=\int_D G_D(x';z') \left[ \int_\cS |\pa_{x_N}P_B(z',0;y)||\fhi(y)|\,dS_y \right] dz'
$$
(see \cite[Theorem 1.12]{LL}). The right-hand side is finite thanks to the properties of $G_D$ and  Lemma \ref{lem:integral-P} with $\ka=0$ and $|\be|=1$.
\par
Finally, that 
$$
W(x)=\int_\cS \left[\int_0^{x_N} P_B(x',t; y)\,dt\right]\fhi(y) \, dS_y
$$
follows from \eqref{def-W}, again by a straightforward application of the Fubini theorem.
We have thus proved that the right-hand side of \eqref{representation-alimov} coincides with $W+Z$.
\end{proof}

\section{A priori estimates for the linearized problem}
\label{sec:a-priori-estimates}

A crucial a priori bound we will use to prove Theorem \ref{thm:existence0} is contained in the following theorem.

\begin{thm}\label{prop:estimate-oblique-interior0}
Let $\al \in (0,1)$ and suppose that 
$\fhi\in C^{1+\al}(\ol{B})$ and $\psi \in C^{3/2+\al}(\ol{D})$.
Then a solution $v$ of \eqref{oblique-laminar}-\eqref{equator} has the properties
\begin{equation*}
v \in C^{1+\al}(\ol{B}), 
\quad \partial_{x_N} v  \in C^{1+\al}(\ol{B}), 
\quad x_N D^2_{x'} v \in C^{\al}(\ol{B}).
\end{equation*}
Moreover, the following inequality holds 
for some positive constant $C$ independent of $\fhi$ and $\psi$:
\begin{equation}\label{estimate-oblique-interior}
|v|_{1+\al,\ol{B}} +|\partial_{x_N} v|_{1+\al,\ol{B}}
 +|x_N D^2_{x'} v|_{\al,\ol{B}} \le C\left( |\fhi|_{1+\al,\ol{B}} +|\psi|_{3/2+\al,\ol{D}} \right).
\end{equation}
\end{thm}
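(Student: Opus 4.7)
The plan is to build the required bounds from the explicit decomposition $v=W+Z$ furnished by Proposition~\ref{prop:representation0}, where $W(x)=\int_0^{x_N}w(x',t)\,dt$ and $Z$ solves the planar Poisson problem \eqref{eq-for-Z}. Since $\partial_{x_N}v=\partial_{x_N}W=w$, the middle term in \eqref{estimate-oblique-interior} reduces to an estimate on the harmonic extension $w$ of $\fhi|_\cS$: standard boundary Schauder theory for the Dirichlet problem \eqref{Laplace-Dirichlet} with $C^{1+\al}$ boundary data yields $w\in C^{1+\al}(\ol{B})$ with $|w|_{1+\al,\ol{B}}\le C|\fhi|_{1+\al,\ol{B}}$, which handles $|\partial_{x_N}v|_{1+\al,\ol{B}}$.

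For the first term $|v|_{1+\al,\ol{B}}$ I would treat $W$ and $Z$ separately. For $W$ the normal derivative is $w$ and is already controlled, while for $j<N$ the horizontal derivatives $\partial_{x_j}W(x)=\int_0^{x_N}\partial_{x_j}w(x',t)\,dt$ inherit $C^\al$ regularity by differentiation under the integral sign, with constants bounded by $|w|_{1+\al,\ol{B}}$; this gives $W\in C^{1+\al}(\ol{B})$. For $Z$ the source $f(x'):=\partial_{x_N}w(x',0)$ lies in $C^\al(\ol{D})$ with norm controlled by $|\fhi|_{1+\al,\ol{B}}$, while the Dirichlet datum $\psi\in C^{3/2+\al}(\ol{D})$ is in particular of class $C^{1+\al}(\ol{D})$; boundary Schauder for the Poisson equation on the smooth domain $D$ then produces $Z\in C^{1+\al}(\ol{D})$ with the desired bound.

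The crux of the matter is the weighted second-derivative bound $|x_N D^2_{x'}v|_{\al,\ol{B}}$. For the $W$ contribution, Proposition~\ref{lem:estimate-derivative-w} with $k=1$, $|\be|=2$ yields the pointwise bound
$$
|D^2w(x',t)|\le C|\fhi|_{1+\al,\ol{B}}\,(1-|x'|^2-t^2)^{-1+\al},
$$
which upon integration in $t$ along the vertical segment from $0$ to $x_N$ gives $|D^2_{x'}W(x)|\le C(1-|x'|^2)^{-1/2+\al}$. Since $|x|\le 1$ forces $x_N\le\sqrt{1-|x'|^2}$, the weight $x_N$ exactly compensates this boundary singularity and produces a uniform $L^\infty$ bound on $x_N D^2_{x'}W$. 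For the $Z$ contribution, the extra half-derivative of regularity encoded in $\psi\in C^{3/2+\al}$ is precisely what is needed to ensure that $D^2_{x'}Z$ has a boundary singularity no worse than $(1-|x'|^2)^{-1/2}$, so that once again multiplication by $x_N$ absorbs the blow-up.

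The main obstacle is upgrading these pointwise estimates to a full Hölder seminorm bound: two points $x,y\in\ol{B}$ appearing in the difference quotient may sit at very different distances from $\cS$ and $\cE$, and the Hölder ratio has to be controlled uniformly. I anticipate splitting into a near-diagonal regime, where $|x-y|$ is small compared to the distance to the singular set and the difference is handled via the mean value theorem applied to the integral representations, and a far regime, in which the triangle inequality combined with the sharp pointwise bounds of Proposition~\ref{lem:estimate-derivative-w} and Lemma~\ref{lem:integral-P} is used. The scheme is modelled on Alimov's approach \cite{Al}, and the explicit tracking of the dependence on $|\fhi|_{1+\al,\ol{B}}$ and $|\psi|_{3/2+\al,\ol{D}}$ is exactly what the refined estimates of Section~\ref{sec:alimov} provide.
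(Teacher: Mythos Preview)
Your outline is correct and follows the same route as the paper: decompose $v=W+Z$, control $\partial_{x_N}v=w$ by the $C^{1+\al}$ Schauder estimate for \eqref{Laplace-Dirichlet}, and derive the weighted second-derivative bounds by combining the pointwise decay from Proposition~\ref{lem:estimate-derivative-w} with the compensating factor $x_N\le\sqrt{1-|x'|^2}$. The paper packages your near/far H\"older argument as a standalone lemma (Lemma~\ref{lem:estimate-holder-seminorm}: a gradient bound $|\nabla u|\le M(1-|x|^2)^{-1+\al}$ on $B$ yields $[u]_{\al,\ol{B}}\le CM$), explicitly splits $Z=Z_1+Z_2$ to separate the $\psi$-contribution from the $C^\al$ source $\partial_{x_N}w(\cdot,0)$ (the latter giving $Z_2\in C^{2+\al}(\ol{D})$ outright), and closes with a density step---first taking $\fhi\in C^{2+\al}$, $\psi\in C^{2+\al}$ so that all third derivatives are classical, then passing to the limit---which your sketch would also need to justify the mean-value steps.
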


We will use the following simple estimate,
which is a refinement of \cite[Lemma 3.2]{Al}.
\begin{lem}\label{lem:estimate-integral0}
Let $\ka>0$.
Then there exists a constant $C>0$ such that
\begin{equation*}
|x_N|\int_0^{|x_N|} \frac{dt}{(1-|x'|^2-t^2)^{1+\ka}} \le \frac{C}{(1-|x|^2)^{\ka}}
\end{equation*}
for all $x \in B$.
\end{lem}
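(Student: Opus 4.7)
The plan is to reduce this to a one-variable estimate in the radial parameter. Set
$r=\sqrt{1-|x'|^2}$ and $s=|x_N|$, so that the condition $x\in B$ becomes $0\le s<r$ (the case $|x'|=1$ forcing $s=0$, where the inequality is trivial, can be set aside). The quantity to estimate is
\begin{equation*}
I(x)=s\int_0^{s}\frac{dt}{(r^2-t^2)^{1+\ka}},
\end{equation*}
and the target upper bound rewrites as $C\,(r^2-s^2)^{-\ka}=C\,[(r-s)(r+s)]^{-\ka}$.

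First I would factor $r^2-t^2=(r-t)(r+t)$ and use the trivial bound $(r+t)^{1+\ka}\ge r^{1+\ka}$ for $t\in[0,s]$ to pull out a clean factor:
\begin{equation*}
\int_0^{s}\frac{dt}{(r^2-t^2)^{1+\ka}}\le \frac{1}{r^{1+\ka}}\int_0^{s}\frac{dt}{(r-t)^{1+\ka}}.
\end{equation*}
The remaining one-dimensional integral is elementary and yields
\begin{equation*}
\int_0^{s}\frac{dt}{(r-t)^{1+\ka}}=\frac{1}{\ka}\left[\frac{1}{(r-s)^{\ka}}-\frac{1}{r^{\ka}}\right]\le \frac{1}{\ka\,(r-s)^{\ka}}.
\end{equation*}

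Combining these gives $I(x)\le \dfrac{s}{\ka\,r^{1+\ka}(r-s)^{\ka}}$, so it suffices to show that
\begin{equation*}
\frac{s}{r^{1+\ka}(r-s)^{\ka}}\le \frac{C}{(r-s)^{\ka}(r+s)^{\ka}},
\qquad\text{i.e.}\qquad \frac{s\,(r+s)^{\ka}}{r^{1+\ka}}\le C.
\end{equation*}
Because $s\le r$, we have both $s/r\le 1$ and $r+s\le 2r$, so the left-hand side is bounded by $2^{\ka}$. Taking $C=2^{\ka}/\ka$ closes the argument.

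There is no real obstacle here; the only subtlety is the choice to bound $(r+t)^{1+\ka}$ from below by $r^{1+\ka}$ rather than by the weaker $(r-t)^{1+\ka}$, which is what lets us recover the full weight $(r+s)^{\ka}$ on the right-hand side instead of only $(r-s)^{\ka}$. A naive bound $r^2-t^2\ge r^2-s^2$ would produce $s(r^2-s^2)^{-1-\ka}$, losing a full power in $r-s$, so factoring the difference of squares is essential.
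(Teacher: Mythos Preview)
Your argument is correct and yields the explicit constant $C=2^{\ka}/\ka$. The paper proceeds differently: it sets $|x_N|=\si\sqrt{1-|x'|^2}$ with $\si\in[0,1)$ and uses the change of variable $t=s\sqrt{1-|x'|^2}$ to rewrite the left-hand side times $(1-|x|^2)^{\ka}$ as the single-variable expression
\[
\si\,(1-\si^2)^{\ka}\int_0^{\si}\frac{ds}{(1-s^2)^{1+\ka}},
\]
and then argues that this is bounded on $[0,1)$ because it is continuous and, by L'H\^opital's rule, tends to $1/(2\ka)$ as $\si\to 1^-$. So the paper reduces the two parameters $(r,s)$ to one via scaling and handles the blow-up point by a limit computation, whereas you keep both parameters, factor $r^2-t^2=(r-t)(r+t)$, bound the harmless factor $(r+t)^{1+\ka}$ from below by $r^{1+\ka}$, and integrate the remaining piece explicitly. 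Your route is more elementary (no L'H\^opital, no continuity/compactness reasoning) and produces a concrete constant; the paper's route is slightly more conceptual in that it isolates the scale-invariant content of the estimate.
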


\begin{proof}
Set $|x_N|=\si\,\sqrt{1-|x'|^2}$; it holds that $0\le\si<1$ for $x=(x', x_N)\in B$. By the change of variable $t=s\,\sqrt{1-|x'|^2}$, we have that
$$
\int_0^{|x_N|} \frac{dt}{(1-|x'|^2-t^2)^{1+\ka}} 
 =\frac1{(1-|x'|^2)^{1/2+\ka}} \int_0^\si \frac{ds}{(1-s^2)^{1+\ka}} ,
$$
and hence
$$
|x_N| (1-|x|^2)^{\ka}\int_0^{|x_N|} \frac{dt}{(1-|x'|^2-t^2)^{1+\ka}} =\si (1-\si^2)^{\ka}
\int_0^\si \frac{ds}{(1-s^2)^{1+\ka}}.
$$
The right-hand side is bounded by some constant $C$, 
since L'H\^{o}pital's rule shows that its limit as $\si\to 1^-$ is equal to $1/(2\ka)$.
Thus the lemma follows.
\end{proof}

The following lemma is essentially shown in \cite[Lemma 2.5]{Al}.
For the sake of completeness, we give a proof.

\begin{lem}
\label{lem:estimate-holder-seminorm}
Let $v \in C^1(B) \cap C(\ol{B})$. Suppose that there exist a positive constant $M$ and an  exponent $\al \in (0,1)$ such that
\begin{equation*}
|\nabla v(x)| \le M(1-|x|^2)^{-1+\al} \ \mbox{ for all } \ x\in B.
\end{equation*}
Then $v \in C^\al(\ol{B})$ and it holds that
\begin{equation}
\label{a}
[v]_{\al,\ol{B}} \le CM,
\end{equation}
for some positive constant $C$ only depending on $\al$.
\end{lem}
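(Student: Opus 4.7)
The goal is to bound $|v(x)-v(y)| \le C M |x-y|^\al$ uniformly for $x,y \in \ol B$, and the plan is to reduce everything to an interior estimate by pulling $x$ and $y$ radially inward onto the concentric ball $\{|z|\le 1-r\}$, where $r := |x-y|$, and then joining the pulled-back points by a chord exploiting convexity. The case $r \ge 1$ is trivial: the hypothesis, integrated along any radius starting from the origin, shows that $v$ is uniformly bounded on $\ol B$ by $CM$, so the Hölder quotient is automatically bounded. Henceforth assume $r<1$, set $d(z) := 1-|z|$, and define $x_r := x$ if $d(x) \ge r$, and $x_r := (1-r)\,x/|x|$ otherwise; define $y_r$ analogously. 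Then $|x_r| \le 1-r$, $|x_r - x| \le r$, and consequently $|x_r - y_r| \le 3r$.

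Next, I would estimate $|v(x) - v(x_r)|$, which is nonzero only when $d(x) < r$, by integrating $\na v$ along the radial segment $\ga(s) := x_r + s\,x/|x|$, $s \in [0, r-d(x)]$. Along $\ga$ one has $|\ga(s)| = 1 - r + s$, hence
$$
1-|\ga(s)|^2 \ge 1-|\ga(s)| = r-s,
$$
so the hypothesis gives $|\na v(\ga(s))| \le M(r-s)^{-1+\al}$, which is integrable on $[0,r]$ because $\al>0$. Therefore
$$
|v(x) - v(x_r)| \le M \int_0^{r-d(x)} (r-s)^{-1+\al}\,ds \le \tfrac{M}{\al}\, r^\al,
$$
and the identical bound holds for $|v(y) - v(y_r)|$.

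Finally, the convexity of $\{|z| \le 1-r\}$ implies that the segment joining $x_r$ and $y_r$ stays inside it, where $1-|z|^2 \ge r$ and hence the hypothesis yields $|\na v| \le M r^{-1+\al}$. Since this segment has length at most $3r$, we obtain $|v(x_r) - v(y_r)| \le 3M r^\al$, and summing the three contributions produces the claimed estimate with $C = 2/\al + 3$. I do not anticipate a real obstacle: the only care needed is bookkeeping, namely that $x=0$ always falls under the case $d(x) \ge r$ (so the radial segment is trivial and no division by zero occurs), and that the endpoint of the radial segment at $x \in \ol B$ is handled by combining $v \in C(\ol B)$ with the integrability of the gradient bound, so that the fundamental theorem of calculus applies in the limit.
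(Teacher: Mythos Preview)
Your argument is correct and follows essentially the same radial-pullback-plus-chord idea as the paper's proof, though your single decomposition $v(x)-v(y) = [v(x)-v(x_r)] + [v(x_r)-v(y_r)] + [v(y_r)-v(y)]$ is cleaner than the paper's case analysis via boundary projections. One small wording slip: integrating the gradient along a radius bounds the \emph{oscillation} of $v$ by $CM$, not $v$ itself (adding a constant leaves $\nabla v$ unchanged), but since only differences enter the H\"older seminorm this is harmless.
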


\begin{proof}
The assumption on $v$ gives that
\begin{equation}
\label{ineq-grad}
|\na v(x)|\le M\,(1-|x|)^{-1+\al}=M\,|x-\ol{x}|^{-1+\al} \ \mbox{ with } \ \ol{x}=x/|x|,
\end{equation}
since $\alpha<1$.
\par
(i) Let $\vartheta=|x-\ol{x}|$ and $\ell=(x-\ol{x})/\vartheta=-\ol{x}$. Then, we have that
$$
|v(x)-v(\ol{x})|=\left|\int_0^\vartheta \na v(\ol{x}+t\,\ell)\cdot\ell\,dt\right| \le 
M\,\int_0^\vartheta t^{-1+\al}dt =\frac{M}{\al} \vartheta^{\al}.
$$
\par
(ii) Let $\ol{x}$ and $\ol{y}$ be arbitrary points on $\cS$ with $\vartheta=|\ol{x}-\ol{y}|<1$. Take  $x=(1-\vartheta)\,\ol{x}$ and $y=(1-\vartheta)\,\ol{y}$. Then, for an intermediate point $\xi$ between $x$ and $y$, we have that
\begin{multline*}
|v(\ol{x})-v(\ol{y})|\le |v(x)-v(\ol{x})|+|v(x)-v(y)|+|v(y)-v(\ol{y})| \le \\
\frac{2\,M}{\al} \vartheta^\al+|\na v(\xi)|\,|x-y|\le \frac{2\,M}{\al}\vartheta^\al+M\,\vartheta^{-1+\al} |x-y|\le \\ \frac{2\,M}{\al} \vartheta^\al+M\,\vartheta^\al.
\end{multline*}
Here, we have used (i), \eqref{ineq-grad} and the fact that $|\xi-\ol{\xi}| \ge |x-\ol{x}|=\vartheta$. 
\par
If $\vartheta=|\ol{x}-\ol{y}|\geq 1$, then one can choose a finite number of points on $\cS$, say $\ol{x}=\ol{x}_0,\ol{x}_1,\ol{x}_2,\ol{x}_3,\ol{x}_4=\ol{y}$ with $|\ol{x}_i-\ol{x}_{i+1}|<1$ so that the previous estimate applies, and the combination of the estimates yields the desired inequality. 
Therefore, $|v(\ol{x})-v(\ol{y})|\le C M\,|\ol{x}-\ol{y}|^\al$, for some constant $C$.
\par
(iii) Now, take $x, y\in B$, set $\vartheta=|x-y|$, and let $\ol{x}$ and $\ol{y}$ be the usual projections of $x$ and $y$ on $\cS$. We can always assume that $|y-\ol{y}|\ge |x-\ol{x}|$.
If $|x-\ol{x}|\ge \vartheta$, then, for an intermediate point $\xi$ between $x$ and $y$, 
$$
|v(x)-v(y)|\le |\na v(\xi)|\,|x-y|\le M\,\vartheta^{-1+\al}|x-y|=M\,\vartheta^\al,
$$
thanks to the inequalities $|\xi-\ol{\xi}| \ge |x-\ol{x}| \ge \vartheta$. 
If $|x-\ol{x}|< \vartheta$ instead, we first infer that 
$$
|y-\ol{y}|\le |y-\ol{x}|\le |y-x|+ |x-\ol{x}|<2\,\vartheta.
$$ 
Thus, (i) gives that $|v(x)-v(\ol{x})|\le M\,\vartheta^\al/\al$ and  $|v(y)-v(\ol{y})|\le 2^\al \, M\, \vartheta^\al/\al$, while (ii) yields:
$$
|v(\ol{x})-v(\ol{y})|\le C M\,|\ol{x}-\ol{y}|^\al\le 4^\al \, C M\, \vartheta^\al.
$$
We then conclude thanks to the triangle inequality.
The bound \eqref{a} then follows at once.
\end{proof}

\begin{proof}[Proof of Theorem~\ref{prop:estimate-oblique-interior0}]
Throughout the proof $C$ will denote a generic positive constant 
only depending on $N$ and $\al$.
\par
From Proposition~\ref{prop:representation0}, 
the unique solution $v\in C^1_N(\ol{B})\cap C^2(B)$ of \eqref{oblique-laminar}-\eqref{equator} is given by 
\begin{equation*}
v(x)=W(x)+Z(x')=\int_0^{x_N} w(x',t) \, dt +Z(x'), \quad x=(x',x_N)\in B,
\end{equation*}
$w$ and $Z$ being the solutions of 
\eqref{Laplace-Dirichlet} and \eqref{eq-for-Z}, respectively.
We note that $Z$ is expressed as $Z=Z_1+Z_2$,
where $Z_1$ is the solution of \eqref{eq-for-Z} with $\pa_{x_N} w(x',0)$ replaced by $0$
and $Z_2$ is the solution of \eqref{eq-for-Z} with $\psi=0$.
Hence, it will be enough to prove the three estimates:
\begin{gather}
|W|_{1+\al,\ol{B}} +|\pa_{x_N} W|_{1+\al,\ol{B}}
 +|x_N D^2_{x'} W|_{\al,\ol{B}} \le C\,|\fhi|_{1+\al,\ol{B}},
\label{estimates-W}
\\
|Z_1|_{1+\al,\ol{D}} +|x_N D^2_{x'} Z_1|_{\al,\ol{B}} \le C\,|\psi|_{3/2+\al,\ol{D}}.
\label{estimates-Z1}
\\
|Z_2|_{1+\al,\ol{D}} +|x_N D^2_{x'} Z_2|_{\al,\ol{B}} \le C\,|\fhi|_{1+\al,\ol{B}}.
\label{estimates-Z2}
\end{gather}

We first derive these inequalities 
under the additional assumptions $\fhi \in C^{2+\al}(\ol{B})$ and $\psi \in C^{2+\al}(\ol{D})$.
We then have
\begin{equation}\label{regularity-wWZ}
w \in C^{2+\al}(\ol{B}), 
\quad 
W \in C^{2+\al}(\ol{B}), 
\quad 
Z_1 \in C^{2+\al}(\ol{D}).
\end{equation}
\par
We note that the following inequality holds:
\begin{equation}\label{estimate-Poisson-integral}
|w|_{1+\al,\ol{B}} \le C\,|\fhi|_{1+\al,\ol{B}}.
\end{equation}
Indeed, this is shown as follows.
Since 
\begin{equation*}
\int_{\cS} P_B(x;y) \, dS_y=1,
\quad
\int_{\cS} P_B(x;y) \, y \, dS_y =x,
\end{equation*}
we have
\begin{equation*}
\nabla w(x) =\nabla \fhi(x)+ \int_{\cS} \nabla_x P_B(x;y) \, [\fhi (y)-\fhi (x)-\nabla \fhi (x) \cdot (y-x)] \, dS_y.
\end{equation*}
This with Lemma~\ref{prop:derivatives-P} shows that
\begin{equation*}
|\nabla w(x)| \le |\nabla\fhi(x)|+C[\nabla\fhi]_{\al,\ol{B}} \int_{\cS} |x-y|^{-N+1+\al} dS_y.
\end{equation*}
We easily find that the integral on the right is finite and is bounded by some constant independent of $x$,
and hence $|\nabla w|_{0,\ol{B}} \le C|\nabla \fhi|_{\al,\ol{B}}$.
Since Proposition~\ref{lem:estimate-derivative-w} gives the inequality
$|D^2 w(x)| \le C|\fhi|_{1+\al,\ol{B}}(1-|x|^2)^{-1+\al}$,
we have that $[\nabla w]_{\al,\ol{B}} \le C|\fhi|_{1+\al,\ol{B}}$
by Lemma~\ref{lem:estimate-holder-seminorm}.
Therefore \eqref{estimate-Poisson-integral} holds.

First, we observe that \eqref{estimates-Z2}
easily follows from the Schauder estimates for the Poisson equation
and \eqref{estimate-Poisson-integral}.
In fact, we have that
\begin{equation*}
|Z_2|_{2+\al,\ol{D}} \le C\,|\pa_{x_N} w(\cdot,0)|_{\al,\ol{D}}
 \le C\,|w|_{1+\al,\ol{B}} \le C\,|\fhi|_{1+\al,\ol{B}}.
\end{equation*}

Next, we prove \eqref{estimates-W}.
It is clear that
\begin{equation*}
|W|_{1+\al,\ol{B}} \le C|w|_{1+\al,\ol{B}}.
\end{equation*}
This together with \eqref{estimate-Poisson-integral}
and the fact that $\pa_{x_N} W=w$ then yields:
\begin{equation*}
|W|_{1+\al,\ol{B}} +|\pa_{x_N} W|_{1+\al,\ol{\Om}}
 \le C\,|w|_{1+\al,\ol{B}} \le C\,|\fhi|_{1+\al,\ol{B}}.
\end{equation*}
Therefore, we only need to verify that
\begin{equation}
\label{estimate-W}
|x_N D^2_{x'} W|_{\al,\ol{B}} \le C|\fhi|_{1+\al,\ol{B}}.
\end{equation}
\par
To prove this inequality,
we examine pointwise estimates of $D^2_{x'} W$.
Proposition~\ref{lem:estimate-derivative-w} yields that
\begin{equation*}
|D^2_{x'} W(x)| =\left| \int_0^{x_N} D^2_{x'} w(x',t)dt \right|
 \le 
C\,|\fhi|_{1+\al,\ol{B}} \int_0^{|x_N|} \frac{dt}{(1-|x'|^2-t^2)^{1-\al}}.
\end{equation*}
We estimate the last integral in two ways. First, by monotonicity in $t$ and $|x_N|$, we see that the integral can be bounded by $(1-|x|^2)^{-1+\al}$. From this, we infer that
\begin{equation}
\label{D2W-estimate1}
|D^2_{x'} W(x)| \le C\,|\fhi|_{1+\al,\ol{B}}\, (1-|x|^2)^{-1+\al}.
\end{equation}
Secondly, by the inequality 
\begin{equation}\label{inequality-xN}
x_N^2 \le 1-|x'|^2 \ \mbox{ for } \ (x',x_N)\in B, 
\end{equation}
we get that
$$
\int_0^{|x_N|} \frac{dt}{(1-|x'|^2-t^2)^{1-\al}} \le
\int_0^{|x_N|} \frac{dt}{(x_N^2 -t^2)^{1-\al}}=|x_N|^{-1+2\al} \int_0^1 \frac{ds}{(1-s^2)^{1-\al}},
$$
after the change of variable $t=|x_N|\, s$.
This gives the bound:
\begin{equation}
\label{D2W-estimates2}
|x_N D^2_{x'} W|_{0,\ol{B}} \le C|\fhi|_{1+\al,\ol{B}}.
\end{equation}
\par
In order to estimate the H\"{o}lder seminorm $[x_N D^2_{x'} W]_{\al,\ol{B}}$, 
we consider the derivatives of $D^2_{x'} W$.
We use Proposition~\ref{lem:estimate-derivative-w} to obtain
\begin{equation*}
\label{D3W-estimate0}
|D^3_{x'} W(x)|=\left|\int_0^{x_N} D^3_{x'} w(x',t) dt\right|
\le C\,|\fhi|_{1+\al,\ol{B}} \int_0^{|x_N|} \frac{dt}{(1-|x'|^2-t^2)^{2-\al}}.
\end{equation*}
Applying Lemma~\ref{lem:estimate-integral0} to the right-hand side,
we infer that
\begin{equation}\label{D3W-estimate1}
|x_N D^3_{x'} W(x)| \le C\,|\fhi|_{1+\al,\ol{B}}\, (1-|x|^2)^{-1+\al}.
\end{equation}
Furthermore, we see from Proposition~\ref{lem:estimate-derivative-w} that
\begin{equation}\label{D3W-estimate2}
|\partial_{x_N} D^2_{x'} W(x)|=|D^2_{x'} w(x)| 
\le C\,|\fhi|_{1+\al,\ol{B}}\, (1-|x|^2)^{-1+\al}.
\end{equation}
Combining \eqref{D2W-estimate1}, \eqref{D3W-estimate1} and \eqref{D3W-estimate2}, 
we deduce that
\begin{equation*}
|x_N D^3_{x'} W(x)| +|\partial_{x_N} (x_N D^2_{x'} W(x))| 
\le C\,|\fhi|_{1+\al,\ol{B}}\, (1-|x|^2)^{-1+\al}.
\end{equation*}
Thus, by \eqref{regularity-wWZ} and Lemma~\ref{lem:estimate-holder-seminorm},
we obtain that $[x_N D^2_{x'} W]_{\al,\ol{B}} \le C\,|\fhi|_{1+\al,\ol{B}}$.
This together with \eqref{D2W-estimates2} shows that \eqref{estimate-W} holds.
\par
Finally, we verify \eqref{estimates-Z1}.
Note that the same estimates as in Proposition~\ref{lem:estimate-derivative-w}
and \eqref{estimate-Poisson-integral} hold
if $w$, $\fhi$ and $B$ are replaced by $Z_1$, $\psi$ and $D$, respectively.
By \eqref{estimate-Poisson-integral}, we see that
\begin{equation}\label{estimate0-Z1}
|Z_1|_{1+\al,\ol{D}} \le C\,|\psi|_{1+\al,\ol{D}} \le C\,|\psi|_{3/2+\al,\ol{D}}.
\end{equation}
\par
Proposition~\ref{lem:estimate-derivative-w} and \eqref{inequality-xN} show that
\begin{gather*}
|x_N D_{x'}^2 Z_1(x')| \le C\,|\psi|_{3/2,\ol{D}}\,|x_N|\,(1-|x'|)^{-\frac{1}{2}}
 \le C\,|\psi|_{3/2+\al,\ol{D}},
\\
|x_N D_{x'}^3 Z_1(x')| \le C\,|\psi|_{3/2+\al,\ol{D}} \, |x_N| \, (1-|x'|)^{-\frac{3}{2}+\al}
 \le C\,|\psi|_{3/2+\al,\ol{D}} \, (1-|x|)^{-1+\al},
\end{gather*}
and
\begin{multline*}
|\pa_{x_N}(x_N D_{x'}^2 Z_1(x'))| =|D_{x'}^2 Z_1(x')| \le
\\
C\,|\psi|_{1+\al,\ol{D}} \, (1-|x'|)^{-1+\al}
 \le C\,|\psi|_{3/2+\al,\ol{D}} \, (1-|x|)^{-1+\al}.
\end{multline*}
Hence it follows from \eqref{regularity-wWZ} and Lemma~\ref{lem:estimate-holder-seminorm} that
\begin{equation*}
|x_N D_{x'}^2 Z_1|_{\al,\ol{B}} \le C\,|\psi|_{3/2+\al,\ol{D}}.
\end{equation*}
This and \eqref{estimate0-Z1} gives \eqref{estimates-Z1}.
\par
Now, if $\fhi\in C^{1+\al}(\ol{B})$ and $\psi\in C^{3/2+\alpha}(\overline{D})$, we take sequences 
$\{\fhi_j\} \subset C^{2+\al}(\ol{B})$ and $\{\psi_j\} \subset C^{2+\al}(\ol{D})$ such that 
\begin{gather}
\fhi_j \to \fhi \ \mbox{ in } \ C(\ol{B}),
\quad \psi_j \to \psi \ \mbox{ in } \ C(\ol{D}) \ \mbox{ as } \ j \to \infty,
\label{approximation-f1}
\\
|\fhi_j|_{1+\al,\ol{B}} \le C\,|\fhi|_{1+\al,\ol{B}},  
\quad
|\psi_j|_{3/2+\al,\ol{D}} \le C\,|\psi|_{3/2+\al,\ol{D}},
\quad j=1, 2, \ldots.
\label{approximation-f2}
\end{gather}
Let $w_j$ be a solution of \eqref{Laplace-Dirichlet} with $\fhi=\fhi_j$. 
Then, \eqref{approximation-f1} and the Schauder interior estimates for Poisson's equation
give that $w_j \to w$ in $C^2_{\rm loc}(B)$ as $j \to \infty$.
Hence $W_j(x)=\int_0^{x_N} w_j(x',t)\,dt$ converges to $W$ in $C^2_{\rm loc}(B)$.
Since the inequality 
$|x_N D^2_{x'} W_j|_{\al,\ol{B}} \le C|\fhi_j|_{1+\al,\ol{B}}$ is valid,
we obtain \eqref{estimate-W} by using \eqref{approximation-f2} and letting $j \to \infty$.
In a similar way,
approximating $\psi$ by $\psi_j$ gives that 
\eqref{estimates-Z1} is still valid for $\psi \in C^{3/2+\al}(\ol{D})$.
Thus the proof is complete.
\end{proof}

\section{Local existence for the \\ interior Backus problem
with symmetric data}
\label{sec:backus}

This section is devoted to the proof of Theorem~\ref{thm:existence0}.

\subsection{The nonlinear operator $\cT$}

We define an operator $\cT$ by setting
\begin{equation*}
\cT[\fhi]=|\na v|^2,
\end{equation*}
where $v$ satisfies \eqref{oblique-laminar}-\eqref{equator} with $\psi=0$. The next proposition shows that $\cT$ is locally bounded and locally Lipschitz continuous on bounded subsets of $C_{\rm even}^{1+\al}(\ol{B})$.

\begin{prop}\label{lem:T-estimates}
We have that $\cT$ is a mapping from $C_{\rm even}^{1+\al}(\ol{B})$ into itself.
Furthermore, there are positive constants $C_1$ and $C_2$ such that
\begin{gather}
\bigl| \cT[\fhi]\bigr|_{1+\al,\ol{B}} \le C_1\,|\fhi|_{1+\al,\ol{B}}^2,
\label{T-estimate01}
\\
\bigl| \cT[\fhi_1] -\cT[\fhi_2]\bigr|_{1+\al,\ol{B}}
 \le C_2\, \left( |\fhi_1|_{1+\al,\ol{B}} +|\fhi_2|_{1+\al,\ol{B}}\right) |\fhi_1-\fhi_2|_{1+\al,\ol{B}}
\label{T-estimate02}
\end{gather}
for all $\fhi, \fhi_1, \fhi_2 \in C_{\rm even}^{1+\al}(\ol{B})$.
\end{prop}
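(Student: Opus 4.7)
The plan is to combine Theorem \ref{prop:estimate-oblique-interior0} with the symmetries forced by the evenness of $\fhi$ in $x_N$, and then to deduce both estimates from a single polarization identity.

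First, for $\fhi\in C_{\rm even}^{1+\al}(\ol{B})$, I construct $v$ via Proposition \ref{prop:representation0} applied to the data $\fhi$ and $\psi=0$, and apply Theorem \ref{prop:estimate-oblique-interior0}, which gives
\begin{equation*}
|v|_{1+\al,\ol{B}}+|\pa_{x_N} v|_{1+\al,\ol{B}}+|x_N D^2_{x'}v|_{\al,\ol{B}}\le C\,|\fhi|_{1+\al,\ol{B}}.
\end{equation*}
Since $\fhi$ is even in $x_N$, the function $\tilde v(x',x_N):=-v(x',-x_N)$ also solves \eqref{oblique-laminar}-\eqref{equator} with the same data, so Proposition \ref{prop:uniqueness} forces $v$ to be odd in $x_N$. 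In particular $\pa_{x_j} v$ ($j<N$) is odd in $x_N$ and $|\na v|^2$ is even in $x_N$, handling the evenness part of $\cT[\fhi]\in C^{1+\al}_{\rm even}(\ol{B})$. Moreover, the evenness of $\fhi|_\cS$ propagates to the harmonic extension $w$, so $\pa_{x_N}w(x',0)\equiv 0$, whence $Z_2=0$ in Proposition \ref{prop:representation0} and $v=W$.

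To establish the $C^{1+\al}$ regularity of
\begin{equation*}
|\na v|^2=(\pa_{x_N}v)^2+\sum_{j<N}(\pa_{x_j}v)^2,
\end{equation*}
the first summand is immediate from $\pa_{x_N}v=w\in C^{1+\al}(\ol{B})$ with $|w|_{1+\al,\ol{B}}\le C|\fhi|_{1+\al,\ol{B}}$, as established during the proof of Theorem \ref{prop:estimate-oblique-interior0}. For $j<N$, the boundary condition $v=0$ on $\cE$ gives $\pa_{x_j} v=0$ on $\cE$, so writing
\begin{equation*}
\pa_{x_j} v(x)=\int_0^{x_N}\pa_{x_j} w(x',t)\,dt=x_N\int_0^1\pa_{x_j} w(x',sx_N)\,ds=:x_N\,G_j(x),
\end{equation*}
one gets $(\pa_{x_j}v)^2=x_N^2\,G_j^2$. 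The function $G_j$ inherits $C^\al(\ol{B})$ regularity from $\pa_{x_j}w$, and combining Proposition \ref{lem:estimate-derivative-w} to estimate higher derivatives of $w$ with Lemmas \ref{lem:estimate-integral0} and \ref{lem:estimate-holder-seminorm}, together with the inequality $x_N^2\le 1-|x'|^2$ in $\ol{B}$, one absorbs the pointwise blow-up of $\na G_j$ near $\cE$ against the quadratic weight $x_N^2$. This yields $x_N^2 G_j^2\in C^{1+\al}(\ol{B})$ with norm controlled by $C|\fhi|_{1+\al,\ol{B}}^2$, and summing over $j$ produces \eqref{T-estimate01}.

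Finally, for \eqref{T-estimate02} I invoke the polarization identity
\begin{equation*}
\cT[\fhi_1]-\cT[\fhi_2]=\na(v_1+v_2)\cdot\na(v_1-v_2),
\end{equation*}
together with the linearity of \eqref{oblique-laminar}-\eqref{equator}, which implies that $v_1\pm v_2$ is the solution associated with $\fhi_1\pm\fhi_2$, still even in $x_N$. Running the analysis of the previous paragraph bilinearly on each of the products $\na v_1\cdot\na v_2$ delivers the required estimate. The main obstacle of the argument is the verification of the $C^{1+\al}$ regularity of the tangential squared terms $(\pa_{x_j}v)^2$ for $j<N$: from Theorem \ref{prop:estimate-oblique-interior0} alone, $\pa_{x_j}v$ is only $C^\al(\ol{B})$ and $D^2_{x'}v$ generally exhibits a blow-up of rate $1/x_N$ near the equator, so without exploiting the cancellation $\pa_{x_j} v=x_N G_j$ forced by the symmetry, these terms would not even be $C^1$ up to $\cE$.
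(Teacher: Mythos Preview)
Your proposal is correct and rests on the same key device as the paper: exploiting the oddness of $v$ in $x_N$ to factor $\partial_{x_j}v = x_N G_j$ for $j<N$. Your $G_j$ is exactly $\partial_{x_j}\omega$, where $\omega=v/x_N$ is the auxiliary function the paper isolates in Lemma~\ref{lem:om-estimate0}; the observation that $w$ is even, hence $\partial_{x_N}w(\cdot,0)\equiv 0$ and $v=W$, is a pleasant simplification the paper does not state explicitly. The polarization argument for \eqref{T-estimate02} also matches the paper's bilinear expansion.

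Where the two executions differ is in how the $C^{1+\al}$ regularity of $(\partial_{x_j}v)^2$ is extracted. You propose to re-enter the pointwise machinery (Proposition~\ref{lem:estimate-derivative-w}, Lemmas~\ref{lem:estimate-integral0} and~\ref{lem:estimate-holder-seminorm}) and absorb the blow-up of $\nabla G_j$ against the weight $x_N^2$; this can be made to work but the details are delicate and you have only sketched them. The paper instead writes
\[
\partial_{x_k}(\partial_{x_j}v)^2 = 2(\partial_{x_j}\omega)\,(x_N\,\partial^2_{x_jx_k}v)
\]
and reads off that both factors lie in $C^\al(\ol{B})$ directly from what Theorem~\ref{prop:estimate-oblique-interior0} already provides: the first via the one-line Lemma~\ref{lem:om-estimate0} (giving $|\omega|_{1+\al}\le |\partial_{x_N}v|_{1+\al}$), the second from the weighted estimate $|x_N D^2_{x'}v|_\al\le C|\fhi|_{1+\al}$. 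This exploits the full output of Theorem~\ref{prop:estimate-oblique-interior0} rather than re-deriving pieces of it, and is the reason that weighted term was included in \eqref{estimate-oblique-interior} in the first place.
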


To prove this proposition,
we need the following simple lemma.
\begin{lem}\label{lem:om-estimate0}
Let $k$ be a non-negative integer and take $\al \in (0,1)$.
Suppose that a function $v$ defined on $\ol{B}$ 
is such that $\pa_{x_N} v$ in $C^{k+\al}(\ol{B})$ 
and is zero on $\ol{D}\times\{ 0\}$.
Then the function defined by
\begin{equation}
\label{om-definition}
\om(x)=\begin{cases}
\displaystyle \frac{v(x)}{x_N} & \mbox{ for } \ x_N \neq 0,
\\
\pa_{x_N} v(x',0) & \mbox{ for } \ x_N=0,
\end{cases}
\end{equation}
belongs to $C^{k+\al}(\ol{B})$ and satisfies the inequality:
\begin{equation}\label{om-estimate0}
|\om|_{k+\al,\ol{B}} \le |\pa_{x_N} v|_{k+\al,\ol{B}}.
\end{equation}
\end{lem}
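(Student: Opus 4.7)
The plan is to reduce the bound to an explicit averaging representation of $\om$ in terms of $g:=\pa_{x_N}v$, so that the regularity of $\om$ is inherited term-by-term. Since $v=0$ on $\ol{D}\times\{0\}$ by hypothesis, the fundamental theorem of calculus combined with the change of variable $t=s\,x_N$ gives
\begin{equation*}
v(x',x_N)=\int_0^{x_N} g(x',t)\,dt = x_N\int_0^1 g(x',s\,x_N)\,ds
\end{equation*}
for every $x=(x',x_N)\in \ol{B}$; note that $(x',s\,x_N)\in \ol{B}$ for all $s\in[0,1]$, so the integrand is well defined. Dividing by $x_N$ when $x_N\ne 0$, and comparing with \eqref{om-definition} at $x_N=0$, the function $\om$ can be recast as the single formula
\begin{equation*}
\om(x)=\int_0^1 g(x',s\,x_N)\,ds,\qquad x\in \ol{B},
\end{equation*}
valid uniformly across the hyperplane $x_N=0$. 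This immediately eliminates the apparent singularity at $x_N=0$.

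From this representation, all the estimates will fall out by differentiation under the integral sign. For any multi-index $\ga=(\ga',\ga_N)$ with $|\ga|\le k$, the chain rule gives
\begin{equation*}
D^\ga \om(x)=\int_0^1 s^{\ga_N}\,(D^\ga g)(x',s\,x_N)\,ds,
\end{equation*}
and since $s^{\ga_N}\le 1$ on $[0,1]$, I would get the pointwise bound $|D^\ga \om|_{0,\ol{B}}\le |D^\ga g|_{0,\ol{B}}$ at once. For the top-order terms $|\ga|=k$, I would combine the elementary inequality $|(x'-y',s(x_N-y_N))|\le |x-y|$, which holds for $s\in[0,1]$, with the $\al$-Hölder continuity of $D^\ga g$ to obtain $[D^\ga \om]_{\al,\ol{B}}\le [D^\ga g]_{\al,\ol{B}}$.

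Summing these inequalities over $|\ga|\le k$ and $|\ga|=k$ respectively produces
\begin{equation*}
|\om|_{k+\al,\ol{B}}\le |g|_{k+\al,\ol{B}}=|\pa_{x_N}v|_{k+\al,\ol{B}},
\end{equation*}
which is \eqref{om-estimate0}. I do not expect any genuine obstacle: the entire argument hinges on recognizing the averaging formula for $\om$, which transfers all the work to uniform (in $s$) bounds on $g$. The only mild subtlety is to verify continuity of $\om$ on $\{x_N=0\}$, but this is built directly into the integral representation, which reduces to $\pa_{x_N}v(x',0)$ there by the dominated convergence theorem.
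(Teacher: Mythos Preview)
Your proposal is correct and follows essentially the same approach as the paper: both use the fundamental theorem of calculus and the substitution $t=s\,x_N$ to write $\om(x)=\int_0^1 \pa_{x_N}v(x',s\,x_N)\,ds$, differentiate under the integral to get $D^\ga\om(x)=\int_0^1 s^{\ga_N}(D^\ga\pa_{x_N}v)(x',s\,x_N)\,ds$, and read off the bound from there. Your write-up is in fact slightly more explicit about the H\"older seminorm step than the paper's.
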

\begin{proof}
By the fundamental theorem of calculus, we have that
$$
v(x',x_N)=x_N \int_0^1 \pa_{x_N}v(x', x_N t)\,dt,
$$
and hence $\om$ can be written as
\begin{equation*}
\om(x)=\int_0^1 \pa_{x_N} v(x',x_N t)\, dt.
\end{equation*}
Thus, we see that the partial derivative $D_x^\be \om$ exists 
for any multi-index $\be=(\be_1,\ldots,\be_N)$ with $|\be| \le k$
and is given by
\begin{equation*}
D^\be_x \om(x)=\int_0^1 t^{\be_N} D^\be_x \pa_{x_N} v(x',x_N t) dt.
\end{equation*}
The assertion then easily follows from this formula.
\end{proof}

\begin{proof}[Proof of Proposition~\ref{lem:T-estimates}]
Throughout the proof,
$i$ is any index in $\{1,2,\ldots,N-1\}$
and $C$ is a generic positive constant only depending on $N$ and $\al$.
\par
(i) Let $\fhi \in C_{\rm even}^{1+\al}(\ol{B})$ and
let $v$ denote the solution of the problem \eqref{oblique-laminar}--\eqref{equator} with $\psi=0$.
Since $\fhi$ is even in $x_N$,
we see that the function $-v(x',-x_N)$ 
is also a solution of \eqref{oblique-laminar}--\eqref{equator} with $\psi=0$. By uniqueness, 
we infer that $v$ is odd in $x_N$.
In particular, $v(x',0)=0$,
and hence
\begin{equation}\label{T-derivative}
\pa_{x_j} (\pa_{x_i} v)^2 =2\pa_{x_i} v \, \pa^2_{x_ix_j} v
 =2\left( \pa_{x_i} \om \right) \left( x_N \pa^2_{x_ix_j} v \right)
\ \mbox{ for } \ j=1,\ldots,N,
\end{equation}
where $\om$ is a function given by \eqref{om-definition}. 
From Theorem~\ref{prop:estimate-oblique-interior0} and Lemma~\ref{lem:om-estimate0},
we see that the right-hand side of this equality is in $C^{\al} (\ol{B})$.
Therefore, we have that $(\pa_{x_i} v)^2 \in C^{1+\al} (\ol{B})$.
This together with the fact that $(\pa_{x_N} v)^2 \in C^{1+\al} (\ol{B})$,
which follows from Theorem~\ref{prop:estimate-oblique-interior0}, gives:
\begin{equation*}
\cT[\fhi]=|\nabla v|^2 \in C^{1+\al}(\ol{B}).
\end{equation*}
Since the fact that $v$ is odd in $x_N$ yields that $|\na v(x',x_N)|=|\na v(x',-x_N)|$,
we conclude that $\cT$ is a mapping 
from $C_{\rm even}^{1+\al}(\ol{B})$ to itself.
\par
(ii) Let us derive \eqref{T-estimate01}.
By \eqref{T-derivative}, we have:
\begin{multline*}
\left| (\pa_{x_i} v)^2\right|_{1+\al,\ol{B}}
 =\left| (\pa_{x_i} v)^2\right|_{0,\ol{B}}
  +\sum_{j=1}^N \left| \pa_{x_j} (\pa_{x_i} v)^2\right|_{\al,\ol{B}}=
\\
\left| (\pa_{x_i} v)^2\right|_{0,\ol{B}}
 +2\sum_{j=1}^{N-1} \left| \left( \pa_{x_i} \om \right)
  \left( x_N \pa^2_{x_ix_j} v \right) \right|_{\al,\ol{B}}
   +2\left| \pa_{x_i} v \,\pa^2_{x_ix_N} v) \right|_{\al,\ol{B}}.
\end{multline*}
From \eqref{estimate-oblique-interior}, 
the first and third terms of the rightest-hand side are handled as
\begin{gather*}
\left| (\pa_{x_i} v)^2\right|_{0,\ol{B}}
 \le \left| \pa_{x_i} v \right|_{0,\ol{B}}^2
  \le |v|_{1+\al,\ol{B}}^2 \le C\,|\fhi|_{1+\al,\ol{B}}^2,
\\
\left| \pa_{x_i} v \, \pa^2_{x_ix_N} v \right|_{\al,\ol{B}}
 \le \left| \pa_{x_i} v \right|_{\al,\ol{B}} \left| \pa^2_{x_ix_N} v \right|_{\al,\ol{B}}
  \le |v|_{1+\al,\ol{B}} \left| \pa_{x_N} v\right|_{1+\al,\ol{B}}
   \le C|\fhi|_{1+\al,\ol{B}}^2.
\end{gather*}
Furthermore, \eqref{estimate-oblique-interior} and \eqref{om-estimate0} 
show that 
\begin{multline*}
\left| \left( \pa_{x_i} \om \right) \left( x_N \pa^2_{x_ix_j} v \right) \right|_{\al,\ol{B}}
 \le \left| \pa_{x_i} \om \right|_{\al,\ol{B}}
  \left| x_N \pa^2_{x_ix_j} v\right|_{\al,\ol{B}} \le \\
\left| \om \right|_{1+\al,\ol{B}}
    \left| x_N \pa^2_{x_ix_j} v\right|_{\al,\ol{B}}
\le \left| \pa_{x_N} v \right|_{1+\al,\ol{B}}
 \left| x_N \pa^2_{x_ix_j} v\right|_{\al,\ol{B}}
  \le C\,|\fhi|_{1+\al,\ol{B}}^2,
\end{multline*}
for $j=1,\ldots,N-1$.
From these estimates it follows that
\begin{equation}
\label{T-estimate01-1}
\left| (\pa_{x_i} v)^2\right|_{1+\al,\ol{B}} \le C\,|\fhi|_{1+\al,\ol{B}}^2.
\end{equation}
In order to estimate  $(\pa_{x_N} v)^2$,
we use \eqref{estimate-oblique-interior} to find that
\begin{equation}
\label{T-estimate01-2}
\left| (\pa_{x_N} v)^2\right|_{1+\al,\ol{B}}
 \le C\,\left| \pa_{x_N} v\right|_{1+\al,\ol{B}}^2 \le C\,|\fhi|_{1+\al,\ol{B}}^2.
\end{equation}
The combination of  \eqref{T-estimate01-1} and \eqref{T-estimate01-2} then gives
\eqref{T-estimate01}.
\par
(iii)
It remains to prove \eqref{T-estimate02}.
For $m=1,2$,
let $v_m$ be the solution of the problem \eqref{oblique-laminar}-\eqref{equator}
with $\fhi=\fhi_m \in C_{\rm even}^{1+\al}(\ol{B})$ and $\psi=0$. Also, let $\om_m$ be defined by \eqref{om-definition} with $v=v_m$. 
\par
It is clear that
\begin{multline*}
\left| (\pa_{x_i} v_1)^2 -(\pa_{x_i} v_2)^2\right|_{0,\ol{B}}\le \\
\left( |v_1|_{0,\ol{B}} +|v_2|_{0,\ol{B}}\right)
 |v_1 -v_2|_{0,\ol{B}} \le
\left( |v_1|_{1+\al,\ol{B}} +|v_2|_{1+\al,\ol{B}}\right)
 |v_1 -v_2|_{1+\al,\ol{B}}.
\end{multline*}
Hence, \eqref{estimate-oblique-interior} easily gives that
\begin{equation}
\label{T-estimate02-1}
\left| (\pa_{x_i} v_1)^2 -(\pa_{x_i} v_2)^2\right|_{0,\ol{B}}
\le C\,\left( |\fhi_1|_{1+\al,\ol{B}} +|\fhi_2|_{1+\al,\ol{B}}\right)
 |\fhi_1-\fhi_2|_{1+\al,\ol{B}}.
\end{equation}
Next, we write the differential identity:
\begin{multline*}
\sum_{j=1}^N \pa_{x_j} \left[ (\pa_{x_i} v_1)^2 -(\pa_{x_i} v_2)^2 \right]=2\,\sum_{j=1}^N \bigl[\pa_{x_i} v_1\, \pa^2_{x_ix_j} v_1
  -\pa_{x_i} v_2\, \pa^2_{x_ix_j} v_2\bigr]= \\
2\,\sum_{j=1}^{N-1} \bigl[ (\pa_{x_i} \om_1) (x_N \pa^2_{x_ix_j} v_1 -x_N \pa^2_{x_ix_j} v_2)
 +x_N\,(\pa^2_{x_ix_j} v_2) (\pa_{x_i} \om_1 -\pa_{x_i} \om_2)\bigr]+ \\
2\,\bigl[(\pa_{x_i} v_1) (\pa^2_{x_ix_N} v_1 -\pa^2_{x_ix_N} v_2)
 +(\pa^2_{x_ix_N} v_2) (\pa_{x_i} v_1 -\pa_{x_i} v_2)\bigr].
\end{multline*}
We then take care of the last summand:
\begin{multline*}
\left| (\pa_{x_i} v_1) (\pa^2_{x_ix_N} v_1 -\pa^2_{x_ix_N} v_2)
 +(\pa^2_{x_ix_N} v_2) (\pa_{x_i} v_1 -\pa_{x_i} v_2) \right|_{\al,\ol{B}} \le
\\
\left| \pa_{x_i} v_1 \right|_{\al,\ol{B}}
 \left| \pa^2_{x_ix_N} v_1 -\pa^2_{x_ix_N} v_2 \right|_{\al,\ol{B}}
  +\left| \pa^2_{x_ix_N} v_2 \right|_{\al,\ol{B}}
   \left| \pa_{x_i} v_1 -\pa_{x_i} v_2 \right|_{\al,\ol{B}}
\le \\
\left| v_1 \right|_{1+\al,\ol{B}}
 \left| \pa_{x_N} v_1 -\pa_{x_N} v_2 \right|_{1+\al,\ol{B}}
  +\left| \pa_{x_N} v_2\right|_{1+\al,\ol{B}} \left| v_1 -v_2\right|_{1+\al,\ol{B}}.
\end{multline*}
Thus, we have that
\begin{multline*}
\left| (\pa_{x_i} v_1) (\pa^2_{x_ix_N} v_1 -\pa^2_{x_ix_N} v_2)
 +(\pa^2_{x_ix_N} v_2) (\pa_{x_i} v_1 -\pa_{x_i} v_2) \right|_{\al,\ol{B}} \le \\
C\,\left( |\fhi_1|_{1+\al,\ol{B}} +|\fhi_2|_{1+\al,\ol{B}}\right)
 |\fhi_1-\fhi_2|_{1+\al,\ol{B}}.
\end{multline*}
Moreover, for $j=1,\ldots,N-1$, we have:
\begin{multline*}
\left|  (\pa_{x_i} \om_1) (x_N \pa^2_{x_ix_j} v_1 -x_N \pa^2_{x_ix_j} v_2)
 +x_N\,(\pa^2_{x_ix_j} v_2) (\pa_{x_i} \om_1 -\pa_{x_i} \om_2) \right|_{\al,\ol{B}}
\le \\
\left| \om_1 \right|_{1+\al,\ol{B}}
 \left| x_N \pa^2_{x_ix_j} v_1 -x_N \pa^2_{x_ix_j} v_2 \right|_{\al,\ol{B}}
  +\left| x_N \pa^2_{x_ix_j} v_2 \right|_{\al,\ol{B}}
   \left| \om_1 -\om_2 \right|_{1+\al,\ol{B}} \le
\\
\left| \pa_{x_N} v_1 \right|_{1+\al,\ol{B}}
 \left| x_N \pa^2_{x_ix_j} v_1 -x_N \pa^2_{x_ix_j} v_2 \right|_{\al,\ol{B}}+
\left| x_N \pa^2_{x_ix_j} v_2 \right|_{\al,\ol{B}}
 \left| \pa_{x_N} v_1 -\pa_{x_N} v_2 \right|_{1+\al,\ol{B}}.
\end{multline*}
where we have used  \eqref{om-estimate0}.
Hence, \eqref{estimate-oblique-interior} gives:
\begin{multline}
\left|  (\pa_{x_i} \om_1) (x_N \pa^2_{x_ix_j} v_1 -x_N \pa^2_{x_ix_j} v_2)
 +x_N\,(\pa^2_{x_ix_j} v_2) (\pa_{x_i} \om_1 -\pa_{x_i} \om_2) \right|_{\al,\ol{B}} \le \\
C\,\left( |\fhi_1|_{1+\al,\ol{B}} +|\fhi_2|_{1+\al,\ol{B}}\right)
 |\fhi_1-\fhi_2|_{1+\al,\ol{B}}.
\label{T-estimate02-3}
\end{multline}

All in all, by \eqref{T-estimate02-1}--\eqref{T-estimate02-3}  
we deduce that
\begin{equation*}
\left| (\pa_{x_i} v_1)^2 -(\pa_{x_i} v_2)^2\right|_{1+\al,\ol{B}}
 \le C\,\left( |\fhi_1|_{1+\al,\ol{B}} +|\fhi_2|_{1+\al,\ol{B}}\right) |\fhi_1-\fhi_2|_{1+\al,\ol{B}},
\end{equation*}
for $i=1,\dots, N-1$.
\par
On the other hand, we can also use \eqref{estimate-oblique-interior} to infer that
\begin{multline*}
\left| (\pa_{x_N} v_1)^2 -(\pa_{x_N} v_2)^2\right|_{1+\al,\ol{B}}\le \\
C\,\left( |\pa_{x_N} v_1|_{1+\al,\ol{B}}
 +|\pa_{x_N} v_2|_{1+\al,\ol{B}}\right) |\pa_{x_N} v_1 -\pa_{x_N} v_2|_{1+\al,\ol{B}} \le \\
C\,\left( |\fhi_1|_{1+\al,\ol{B}} +|\fhi_2|_{1+\al,\ol{B}}\right) |\fhi_1-\fhi_2|_{1+\al,\ol{B}}.
\end{multline*}
\par
In conclusion, we obtain \eqref{T-estimate02}, and the lemma follows.
\end{proof}

\subsection{The nonlinear operator $\tilde \cT_\psi$}

We fix a cut-off function $\eta \in C^\infty(\RR)$ satisfying
\begin{equation*}
\eta (t)=1 \ \mbox{ if } \ |t| \le \frac{1}{3},
\qquad
\eta (t)=0 \ \mbox{ if } \ |t| \ge \frac{2}{3}.
\end{equation*}
For a function $\phi$ defined on $\ol{B}$,
we set
\begin{equation*}
\label{definition-J}
{\cJ}[\phi](x)=\eta(x_N) \phi \left( \sqrt{1-x_N^2}e_1',x_N\right) +(1-\eta(x_N)) \phi(x),
\quad x=(x',x_N) \in \ol{B}.
\end{equation*}
Here $e_1'=(1,0,\ldots,0) \in \RR^{N-1}$.
Then, for fixed $\psi \in C^{3/2+\al}(\ol{D})$, 
we define an operator $\tilde \cT_\psi$ by
\begin{equation*}
\tilde \cT_\psi [\fhi]=\cJ \left[ |\na v|^2\right],
\end{equation*}
where $v$ is the solution of \eqref{oblique-laminar}-\eqref{equator}.
Our goal here is to obtain the properties of $\tilde \cT_\psi$,
which enables us to solve problem \eqref{backus-problem} in $C^{1+\al}_{\rm ax}(\ol{B})$.

\begin{prop}\label{lem:tT-estimates}
Let $\psi \in C^{3/2+\al}(\ol{D})$. 
Then the following hold.
\begin{itemize}
\item[(i)]
The operator $\tilde \cT_\psi$ is a mapping from $C^{1+\al}(\ol{B})$ into itself.
Moreover, there exist positive constants $C_3$ and $C_4$ such that
\begin{equation}
\bigl| \tilde \cT_\psi[\fhi]\bigr|_{1+\al,\ol{B}}
 \le C_3 \, \left( |\fhi|_{1+\al,\ol{B}}^2 +|\psi|_{3/2+\al,\ol{D}}^2\right),
\label{T-estimate03}
\end{equation}
\begin{multline}
\bigl| \tilde \cT_\psi[\fhi_1] -\tilde \cT_\psi[\fhi_2]\bigr|_{1+\al,\ol{B}}
 \le \\
C_4\, \left( |\fhi_1|_{1+\al,\ol{B}} +|\fhi_2|_{1+\al,\ol{B}} +|\psi|_{3/2+\al,\ol{D}} \right)
  |\fhi_1-\fhi_2|_{1+\al,\ol{B}}
\label{T-estimate04}
\end{multline}
for all $\fhi, \fhi_1, \fhi_2 \in C^{1+\al}(\ol{B})$.
\item[(ii)]
If $\fhi \in C^{1+\al}_{\rm ax}(\ol{B})$ and $\psi$ is constant, then
\begin{equation}\label{tT-symmetry}
\tilde \cT_\psi [\fhi] \in C^{1+\al}_{\rm ax}(\ol{B})
\ \mbox{ and } \ \tilde \cT_\psi [\fhi] =|\na v|^2 \ \mbox{ on } \ \cS.
\end{equation}
Here $v$ is the solution of \eqref{oblique-laminar}-\eqref{equator}.
\end{itemize}
\end{prop}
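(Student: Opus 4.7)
My plan is to decompose the cutoff operator as $\cJ[\phi]=\cJ_1[\phi]+\cJ_2[\phi]$ with
\begin{equation*}
\cJ_1[\phi](x)=\eta(x_N)\,\phi\bigl(\sqrt{1-x_N^2}\,e_1',x_N\bigr), \qquad \cJ_2[\phi](x)=(1-\eta(x_N))\,\phi(x),
\end{equation*}
and to treat the two pieces separately. By Theorem~\ref{prop:estimate-oblique-interior0}, $|\na v|^2\in C^\al(\ol B)$ but it is \emph{not} generally in $C^{1+\al}(\ol B)$: the tangential mixed derivatives $\pa^2_{x_ix_j}v$ for $i,j<N$ are only controlled through the weighted bound $x_N D^2_{x'}v\in C^\al(\ol B)$ and may blow up like $1/x_N$ near the equator. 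The role of $\cJ$ is precisely to compensate for this degeneracy. The easy piece is $\cJ_2$: since $1-\eta(x_N)$ vanishes on $\{|x_N|\le 1/3\}$, on its support $|x_N|$ is bounded below, so $D^2 v\in C^\al$ there and $|\na v|^2\in C^{1+\al}$; multiplying by the smooth factor $1-\eta(x_N)$ and extending by $0$ near the equator gives a globally $C^{1+\al}(\ol B)$ function, and the product rule together with \eqref{estimate-oblique-interior} supplies the required quadratic bound.

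The core difficulty is $\cJ_1$. Since $\cJ_1[|\na v|^2]$ depends on $x$ only through $x_N$, it suffices to show that $h(t):=\eta(t)\,|\na v|^2(\ga(t))$, with $\ga(t)=(\sqrt{1-t^2}e_1',t)$, lies in $C^{1+\al}(\RR)$. On the support of $\eta$, where $|t|\le 2/3$ and $\sqrt{1-t^2}$ is smooth and bounded below, the chain rule yields
\begin{equation*}
h'(t)=\eta'(t)\,|\na v|^2(\ga(t))+\eta(t)\left[-\frac{t}{\sqrt{1-t^2}}(\pa_{x_1}|\na v|^2)(\ga(t))+(\pa_{x_N}|\na v|^2)(\ga(t))\right].
\end{equation*}
The term $\pa_{x_N}|\na v|^2=2\sum_k(\pa_{x_k}\pa_{x_N}v)(\pa_{x_k}v)$ lies in $C^\al(\ol B)$ since $\pa_{x_N}v\in C^{1+\al}(\ol B)$. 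The potentially singular tangential derivative $\pa_{x_1}|\na v|^2=2\sum_k(\pa^2_{x_1 x_k}v)(\pa_{x_k}v)$ is tamed by the explicit prefactor $t=x_N$: writing $t\,\pa^2_{x_1 x_k}v=x_N\pa^2_{x_1 x_k}v$, this quantity is in $C^\al(\ol B)$ by Theorem~\ref{prop:estimate-oblique-interior0} (for $k<N$) or by the $C^{1+\al}$-regularity of $\pa_{x_N}v$ (for $k=N$). Evaluating along $\ga$ preserves $C^\al$-regularity as $\ga$ is smooth on $[-2/3,2/3]$. This gives $h\in C^{1+\al}$ and, combining both pieces, yields \eqref{T-estimate03}.

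For the Lipschitz estimate \eqref{T-estimate04}, let $v_m$ solve \eqref{oblique-laminar}-\eqref{equator} with data $(\fhi_m,\psi)$. By linearity $v_1-v_2$ solves \eqref{oblique-laminar}-\eqref{equator} with data $(\fhi_1-\fhi_2,0)$, so Theorem~\ref{prop:estimate-oblique-interior0} controls all three norms of $v_1-v_2$ appearing in \eqref{estimate-oblique-interior} by $|\fhi_1-\fhi_2|_{1+\al,\ol B}$. Writing $|\na v_1|^2-|\na v_2|^2=\na(v_1+v_2)\cdot\na(v_1-v_2)$, the argument used for (i) applies bilinearly: every product in the computation of $\tilde\cT_\psi[\fhi_1]-\tilde\cT_\psi[\fhi_2]$ decomposes into one ``sum'' factor, bounded by $|\fhi_1|_{1+\al,\ol B}+|\fhi_2|_{1+\al,\ol B}+|\psi|_{3/2+\al,\ol D}$, and one ``difference'' factor, bounded by $|\fhi_1-\fhi_2|_{1+\al,\ol B}$.

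For (ii), suppose $\fhi\in C^{1+\al}_{\rm ax}(\ol B)$ and $\psi$ is constant. Both data are invariant under the rotation group of $\RR^{N-1}$, and Proposition~\ref{prop:uniqueness} forces the unique solution $v$ of \eqref{oblique-laminar}-\eqref{equator} in $C^1_N(\ol B)\cap C^2(B)$ to share this symmetry; hence $|\na v|^2$ is axially symmetric around the $x_N$-axis. Thus $\cJ_2[|\na v|^2]$ is axially symmetric (a function of $x_N$ times an axially symmetric function), while $\cJ_1[|\na v|^2]$ depends only on $x_N$ and is trivially so, so together with (i) we get $\tilde\cT_\psi[\fhi]\in C^{1+\al}_{\rm ax}(\ol B)$. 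Finally, on $\cS$ one has $|x'|=\sqrt{1-x_N^2}$, so axial symmetry of $|\na v|^2$ yields $|\na v|^2(x)=|\na v|^2(\sqrt{1-x_N^2}e_1',x_N)$ on $\cS$; consequently
\begin{equation*}
\tilde\cT_\psi[\fhi](x)=(\eta(x_N)+1-\eta(x_N))\,|\na v|^2(x)=|\na v|^2(x)\quad\text{on }\cS,
\end{equation*}
proving \eqref{tT-symmetry}. The main obstacle throughout is the one flagged in the discussion of $\cJ_1$: extracting $C^{1+\al}$-regularity of the boundary term from the merely weighted estimate on $D^2_{x'}v$, which succeeds only because the chain-rule prefactor $x_N/\sqrt{1-x_N^2}$ built into $\cJ_1$ matches the weight in Theorem~\ref{prop:estimate-oblique-interior0} exactly.
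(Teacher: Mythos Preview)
Your proposal is correct and follows essentially the same route as the paper. The paper packages your $\cJ_1/\cJ_2$ argument into a preparatory lemma (showing that $\cJ[\phi]\in C^{1+\al}(\ol B)$ whenever $\phi\in C^\al(\ol B)\cap C^1(B)$ with $\pa_{x_N}\phi\in C^\al(\ol B)$ and $x_N\na_{x'}\phi\in C^\al(\ol B)$), but the proof of that lemma is precisely your split: on $\{|x_N|>1/3\}$ use that $x_N^{-1}$ is smooth, and on $\{|x_N|<2/3\}$ use the chain rule along the curve $\ga(t)=(\sqrt{1-t^2}\,e_1',t)$ so that the prefactor $t/\sqrt{1-t^2}$ absorbs the weight from Theorem~\ref{prop:estimate-oblique-interior0}; the Lipschitz and symmetry parts are likewise handled the same way.
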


Before proving the proposition,
we examine properties of $\cJ$.
\begin{lem}\label{lem:J-S}
If a function $\phi$ defined on $\ol{B}$ satisfies $\phi(x',x_N)=\phi(|x'|e_1',x_N)$, 
then
\begin{equation*}
{\cJ}[\phi]=\phi \ \mbox{ on } \ \cS.
\end{equation*}
\end{lem}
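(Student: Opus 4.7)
The plan is an immediate unpacking of definitions. Fix an arbitrary $x=(x',x_N)\in\cS$, so $|x|=1$ and hence $|x'|=\sqrt{1-x_N^2}$. I would first use the axial symmetry hypothesis $\phi(x',x_N)=\phi(|x'|e_1',x_N)$ to replace $|x'|$ by $\sqrt{1-x_N^2}$, obtaining
\begin{equation*}
\phi(x)=\phi(x',x_N)=\phi\bigl(|x'|e_1',x_N\bigr)=\phi\bigl(\sqrt{1-x_N^2}\,e_1',x_N\bigr).
\end{equation*}

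Substituting this identity into the definition of $\cJ[\phi]$, the two summands collapse to the same value $\phi(x)$:
\begin{equation*}
\cJ[\phi](x)=\eta(x_N)\,\phi\bigl(\sqrt{1-x_N^2}\,e_1',x_N\bigr)+\bigl(1-\eta(x_N)\bigr)\,\phi(x)=\phi(x),
\end{equation*}
regardless of the value of $\eta(x_N)$. Since $x\in\cS$ was arbitrary, this yields $\cJ[\phi]=\phi$ on $\cS$.

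There is no real obstacle; the lemma is entirely geometric, exploiting that on the sphere the radial coordinate $|x'|$ of the $(N-1)$-dimensional slice is uniquely determined by $x_N$, which is exactly the coordinate used to define the cut-off $\eta$. The axial symmetry hypothesis then forces the modified argument $\sqrt{1-x_N^2}\,e_1'$ and the original argument $x'$ to give the same value of $\phi$, so the convex combination via $\eta(x_N)$ is trivial.
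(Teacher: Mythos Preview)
Your proof is correct and follows exactly the same approach as the paper's own proof: both observe that for $x\in\cS$ one has $|x'|=\sqrt{1-x_N^2}$, use the axial symmetry hypothesis to conclude $\phi(\sqrt{1-x_N^2}\,e_1',x_N)=\phi(x)$, and then note that the convex combination via $\eta(x_N)$ collapses to $\phi(x)$.
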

\begin{proof}
Let $x=(x',x_N) \in \cS$.
Then, since $|x'|^2+x_N^2=1$,
we have that
\begin{equation*}
\phi \left( \sqrt{1-x_N^2}e_1',x_N\right) =\phi (|x'|e_1',x_N) =\phi(x).
\end{equation*}
This gives that
${\cJ}[\phi](x)=\eta(x_N) \phi (x) +(1-\eta(x_N)) \phi(x) =\phi(x)$,
as desired.
\end{proof}

\begin{lem}\label{lem:J-estimate}
Suppose that $\phi \in C^\al(\ol{B}) \cap C^1(B)$ satisfies 
$\partial_{x_N} \phi \in C^\al(\ol{B})$ and $x_N \nabla_{x'} \phi \in C^\al(\ol{B})$.
Then ${\cJ}[\phi] \in C^{1+\al}(\ol{B})$ and
\begin{equation*}
|{\cJ}[\phi]|_{1+\al,\ol{B}}
 \le C\left( |\phi|_{\al,\ol{B}} +|\partial_{x_N} \phi|_{\al,\ol{B}}
  +|x_N \na_{x'} \phi|_{\al,\ol{B}} \right),
\end{equation*}
where $C$ is a positive constant independent of $\phi$.
\end{lem}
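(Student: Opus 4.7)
The strategy is to split
\begin{equation*}
\cJ[\phi](x) = A(x) + B(x), \quad A(x) = \eta(x_N)\Phi(x_N), \quad B(x) = (1-\eta(x_N))\phi(x),
\end{equation*}
where $\Phi(t) = \phi(\sqrt{1-t^2}\,e_1', t)$, and estimate each summand separately. The point is that $\eta$ is supported in $|x_N|\le 2/3$ while $1-\eta$ vanishes for $|x_N|\le 1/3$, so the potentially singular factors $1/\sqrt{1-x_N^2}$ (in $A$) and $1/x_N$ (in $B$) appear only in regions where they are smooth and bounded.

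For $B$, the key observation is that for $i=1,\dots,N-1$,
\begin{equation*}
\partial_{x_i} B(x) = \frac{1-\eta(x_N)}{x_N}\,[x_N\partial_{x_i}\phi](x),
\end{equation*}
where the prefactor $(1-\eta)/x_N \in C^\infty(\RR)$, since $1-\eta$ vanishes in a neighborhood of the origin. By hypothesis $x_N\nabla_{x'}\phi \in C^\al(\ol B)$, so $\partial_{x_i} B \in C^\al(\ol B)$. For the vertical derivative, $\partial_{x_N} B = -\eta'(x_N)\phi + (1-\eta(x_N))\partial_{x_N}\phi$ is in $C^\al(\ol B)$ by the hypotheses on $\phi$ and $\partial_{x_N}\phi$. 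All relevant norms are bounded by the right-hand side of the claim.

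For $A$, which depends only on $x_N$, we have $\partial_{x_i} A = 0$ for $i<N$ and $\partial_{x_N} A = \eta'(x_N)\Phi(x_N) + \eta(x_N)\Phi'(x_N)$. The first term lies in $C^\al$ since $\Phi$ is the composition of $\phi \in C^\al(\ol B)$ with a smooth map. The delicate step is to make sense of $\Phi'(t)$, because $\phi$ is only $C^1(B)$ and the curve $t\mapsto(\sqrt{1-t^2}\,e_1', t)$ lies entirely on $\cS$. I would handle this by interior approximation: for $r\in(0,1)$, set $\psi_r(t) = \phi(r\sqrt{1-t^2}\,e_1', rt)$, which is $C^1$ on $(-1,1)$ since its image lies in $B$. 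A direct chain-rule calculation gives
\begin{equation*}
\psi_r'(t) = -\frac{rt}{\sqrt{1-t^2}}\,\partial_{x_1}\phi\bigl(r\sqrt{1-t^2}\,e_1', rt\bigr) + r\,\partial_{x_N}\phi\bigl(r\sqrt{1-t^2}\,e_1', rt\bigr),
\end{equation*}
and since the $N$-th coordinate of the argument is precisely $rt$, the first term equals $-\frac{1}{\sqrt{1-t^2}}\,[x_N\partial_{x_1}\phi]$ evaluated at that point. Letting $r\to 1^-$ and invoking the continuity of $x_N\partial_{x_1}\phi$ and $\partial_{x_N}\phi$ on $\ol B$, I conclude that $\Phi \in C^1((-1,1))$ with
\begin{equation*}
\Phi'(t) = -\frac{1}{\sqrt{1-t^2}}\,[x_N\partial_{x_1}\phi](\sqrt{1-t^2}\,e_1', t) + \partial_{x_N}\phi(\sqrt{1-t^2}\,e_1', t).
\end{equation*}
On $\supp\eta\subset[-2/3,2/3]$, the factor $1/\sqrt{1-t^2}$ is smooth and bounded, so $\eta\Phi' \in C^\al(\ol B)$ with norm controlled by $|x_N\nabla_{x'}\phi|_{\al,\ol B}+|\partial_{x_N}\phi|_{\al,\ol B}$. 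Combining the estimates for $A$ and $B$ yields the desired inequality.

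The main obstacle is the justification of $\Phi'$ in the absence of $C^1$ regularity of $\phi$ up to the boundary $\cS$: the tangential derivative $\partial_{x_1}\phi$ on $\cS$ is not a priori defined, only its product $x_N\partial_{x_1}\phi$ is controlled by hypothesis. The approximation $\psi_r\to\Phi$ from the interior, together with the algebraic identity $rt\,\partial_{x_1}\phi = [x_N\partial_{x_1}\phi]$ at the relevant point, reduces the derivative computation to passing to the limit in expressions involving only functions continuous on $\ol B$, which is what makes the estimate go through.
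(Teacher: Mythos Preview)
Your proof is correct and follows essentially the same approach as the paper: the same decomposition $\cJ[\phi]=\eta\,(\phi\circ\xi)+(1-\eta)\,\phi$, the same use of the supports of $\eta$ and $1-\eta$ to make $1/\sqrt{1-x_N^2}$ and $1/x_N$ harmless, and the same rewriting $\nabla_{x'}\phi = x_N^{-1}\,(x_N\nabla_{x'}\phi)$ where $1-\eta\ne 0$. Your interior-approximation argument for $\Phi'$ is in fact more careful than the paper, which simply asserts the chain-rule formula for $\partial_{x_N}(\phi\circ\xi)$ without commenting on the fact that $\xi$ takes values in $\cS$ where $\phi$ is only assumed $C^\al$.
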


\begin{proof}
Let 
\begin{equation*}
B_1=\left\{ (x',x_N) \in B : |x_N|<\frac{2}{3} \right\},
\qquad
B_2=\left\{ (x',x_N) \in B : |x_N|>\frac{1}{3} \right\},
\end{equation*}
and define the mapping
\begin{equation*}
\xi :\ol{B} \ni (x',x_N) \mapsto \xi(x)=\left( \sqrt{1-x_N^2}e_1',x_N\right) \in \ol{B}.
\end{equation*}
We first show that 
\begin{gather}
\phi \circ \xi \in C^{1+\al}(\ol{B_1}), \qquad \phi \in C^{1+\al}(\ol{B_2}),
\label{J-estimate0}
\\
|\phi \circ \xi|_{1+\al,\ol{B_1}} +|\phi|_{1+\al,\ol{B_2}}
 \le C\left( |\phi|_{\al,\ol{B}} +|\partial_{x_N} \phi|_{\al,\ol{B}}
  +|x_N \na_{x'} \phi|_{\al,\ol{B}} \right).
\label{J-estimate1}
\end{gather}
Here and subsequently, 
$C$ denotes a positive constant independent of $\phi$.

To prove $\phi \circ \xi \in C^{1+\al}(\ol{B_1})$,
we observe that $\phi \circ \xi$ depends only on $x_N$ and
\begin{equation*}
\partial_{x_N} (\phi \circ \xi)
 =(\partial_{x_N} \phi) \circ \xi -\frac{1}{\sqrt{1-x_N^2}} (x_N \partial_{x_1} \phi) \circ \xi.
\end{equation*}
By assumption and the fact that $\xi$ and $1/\sqrt{1-x_N^2}$ are smooth on $\ol{B_1}$,
we see that the right-hand side of this equality is in $C^{\al}(\ol{B_1})$.
Moreover,
\begin{multline*}
|\partial_{x_N} (\phi \circ \xi)|_{\al,\ol{B_1}}
 \le \left| (\partial_{x_N} \phi) \circ \xi \right|_{\al,\ol{B_1}}
  +\left| \frac{1}{\sqrt{1-x_N^2}} (x_N \partial_{x_1} \phi) \circ \xi \right|_{\al,\ol{B_1}} \le
\\
C|\partial_{x_N} \phi |_{\al,\ol{B}}
 +C |(x_N \partial_{x_1} \phi) \circ \xi|_{\al,\ol{B_1}}
  \le C|\partial_{x_N} \phi |_{\al,\ol{B}} +C|x_N \partial_{x_1} \phi|_{\al,\ol{B}}.
\end{multline*}
Therefore, we have that $\phi \circ \xi \in C^{1+\al}(\ol{B_1})$ and
\begin{multline}
\label{J-estimate01}
|\phi \circ \xi|_{1+\al,\ol{B_1}} =|\phi  \circ \xi|_{0,\ol{B_1}}
 +|\partial_{x_N} (\phi \circ \xi)|_{\al,\ol{B}} \le \\
 |\phi|_{0,\ol{B}} +C|\partial_{x_N} \phi |_{\al,\ol{B}} +C|x_N \partial_{x_1} \phi|_{\al,\ol{B}}.
\end{multline}

Since $x_N^{-1}$ is smooth on $\ol{B_2}$,
we deduce that $\phi =x_N^{-1} \cdot (x_N \phi) \in C^{1+\al}(\ol{B_2})$ and
\begin{multline}\label{J-estimate02}
|\phi|_{1+\al,\ol{B_2}} =|\phi|_{0,\ol{B_2}} +|\partial_{x_N} \phi|_{\al,\ol{B_2}}
 +|x_N^{-1} \cdot x_N \nabla_{x'} \phi |_{\al,\ol{B_2}} \le
\\
|\phi|_{0,\ol{B}} +|\partial_{x_N} \phi|_{\al,\ol{B}}
 +C|x_N \nabla_{x'} \phi |_{\al,\ol{B}}.
\end{multline}
By \eqref{J-estimate01} and \eqref{J-estimate02}, 
we obtain \eqref{J-estimate1}.

We note that $\eta$ and $1-\eta$ vanish on 
$\ol{B} \setminus B_1$ and $\ol{B} \setminus B_2$, respectively.
This with \eqref{J-estimate0} shows that
$\cJ[\phi]=\eta (\phi \circ \xi) +(1-\eta)\phi \in C^{1+\al}(\ol{B})$.
Furthermore, 
\begin{equation*}
|{\cJ}[\phi]|_{1+\al,\ol{B}}
 \le C|\eta (\phi \circ \xi)|_{1+\al,\ol{B_1}} +C|(1-\eta) \phi|_{1+\al,\ol{B_2}}
  \le C|\phi \circ \xi|_{1+\al,\ol{B_1}} +C|\phi|_{1+\al,\ol{B_2}}.
\end{equation*}
Combining this and \eqref{J-estimate1} proves the lemma.
\end{proof}

\begin{proof}[Proof of Proposition~\ref{lem:tT-estimates}]
Let $\fhi, \fhi_1,\fhi_2\in C^{1+\al}(\ol{B})$ and $\psi \in C^{3/2+\al}(\ol{D})$.
In the proof,
$C$ stands for a generic positive constant only independent of these functions.

Let $v$ stand for the solution of \eqref{oblique-laminar}-\eqref{equator}.
Then Theorem~\ref{prop:estimate-oblique-interior0} shows that
the function $\phi=|\na v|^2$ satisfies 
$\phi \in C^\al(\ol{B}) \cap C^1(B)$, 
$\pa_{x_N} \phi =2\na v \cdot \na \partial_{x_N} v \in C^\al(\ol{B})$
and $x_N \pa_{x_j} \phi =2x_N \na v \cdot \na \partial_{x_j} v \in C^\al(\ol{B})$
for $j=1,\ldots,N-1$.
Hence, using Lemma~\ref{lem:J-estimate},
we see that $\tilde \cT_\psi[\fhi]={\cJ}[\phi] \in C^{1+\al}(\ol{B})$ and
\begin{equation*}
\bigl| \tilde \cT_\psi [\fhi] \bigr|_{1+\al,\ol{B}}
 \le C\left( \bigl| |\na v|^2 \bigl|_{\al,\ol{B}}
  +\bigl| \na v \cdot \na \partial_{x_N} v \bigl|_{\al,\ol{B}}
   +\sum_{j=1}^{N-1} \bigl| x_N \na v \cdot \na \partial_{x_j} v \bigl|_{\al,\ol{B}} \right).
\end{equation*}
Each term of the right-hand side is estimated as
\begin{equation*}
\bigl| |\na v|^2 \bigl|_{\al,\ol{B}} \le |v|_{1+\al,\ol{B}}^2,
\qquad
\bigl| \na v \cdot \na \partial_{x_N} v \bigl|_{\al,\ol{B}}
 \le C|v|_{1+\al,\ol{B}} |\partial_{x_N} v|_{1+\al,\ol{B}},
\end{equation*}
and
\begin{multline*}
\sum_{j=1}^{N-1} \bigl| x_N \na v \cdot \na \partial_{x_j} v \bigl|_{\al,\ol{B}}
 \le C\sum_{j=1}^{N-1}|v|_{1+\al,\ol{B}} |x_N \na \partial_{x_j} v|_{\al,\ol{B}} \le
\\ 
C|v|_{1+\al,\ol{B}} \left( |x_N D_{x'}^2 v|_{\al,\ol{B}} 
 +|\partial_{x_N} v|_{1+\al,\ol{B}} \right).
\end{multline*}
Therefore, by \eqref{estimate-oblique-interior}, 
we conclude that $\tilde \cT_\psi$ is a mapping from $C^{1+\al}(\ol{B})$ into itself
and that \eqref{T-estimate03} holds.
\par
Inequality \eqref{T-estimate04} is shown as follows.
For $i=1,2$, we denote by $v_i$ the solution of \eqref{oblique-laminar}-\eqref{equator}
with $\fhi=\fhi_i$ and $\psi=\psi_i$.
We see from Lemma~\ref{lem:J-estimate} that
\begin{multline}\label{T-estimate040}
\bigl| \tilde \cT_\psi [\fhi_1] -\tilde \cT_\psi [\fhi_2] \bigr|_{1+\al,\ol{B}}
 =\bigl| \cJ \bigl[ |\na v_1|^2 -|\na v_2|^2\bigr] \bigr|_{1+\al,\ol{B}} \le
\\
C\left( \bigl| |\na v_1|^2 -|\na v_2|^2 \bigr|_{0,\ol{B}}
 +\bigl| \partial_{x_N} \left( |\na v_1|^2 -|\na v_2|^2 \right) \bigr|_{\al,\ol{B}} + \right.
\\ \left. \sum_{j=1}^{N-1} \bigl| x_N \partial_{x_j} \left( |\na v_1|^2 -|\na v_2|^2 \right)
 \bigr|_{\al,\ol{B}} \right).
\end{multline}
For abbreviation, 
we write $w_1=v_1 +v_2$ and $w_2=v_1 -v_2$.
Then the first and second terms on the right of the above inequality can be handled as
\begin{equation}
\label{T-estimate041}
\bigl| |\na v_1|^2 -|\na v_2|^2 \bigr|_{0,\ol{B}}
 =\bigl| \na w_1 \cdot \na w_2 \bigr|_{0,\ol{B}}
  \le C|w_1|_{1+\al,\ol{B}} |w_2|_{1+\al,\ol{B}},
\end{equation}
and
\begin{multline*}
\bigl| \partial_{x_N} \left( |\na v_1|^2 -|\na v_2|^2 \right) \bigr|_{\al,\ol{B}}
 =\bigl| \na \partial_{x_N} w_1 \cdot \na w_2
  +\na w_1 \cdot \na \partial_{x_N} w_2 \bigr|_{\al,\ol{B}} \le
\\
C|\partial_{x_N} w_1|_{1+\al,\ol{B}} |w_2|_{1+\al,\ol{B}}
 +C|w_1|_{1+\al,\ol{B}} |\partial_{x_N} w_2|_{1+\al,\ol{B}}.
\end{multline*}
The other terms are estimated as
\begin{multline*}
\bigl| x_N \partial_{x_j} \left( |\na v_1|^2 -|\na v_2|^2 \right) \bigr|_{\al,\ol{B}}=
 \bigl| x_N \na \partial_{x_j} w_1 \cdot \na w_2
  +x_N \na w_1 \cdot \na \partial_{x_j} w_2 \bigr|_{\al,\ol{B}} \le
\\
|x_N \nabla \partial_{x_j} w_1|_{\al,\ol{B}} |w_2|_{1+\al,\ol{B}}
 +|w_1|_{1+\al,\ol{B}} |x_N \nabla \partial_{x_j} w_2|_{\al,\ol{B}},
\end{multline*}
and hence
\begin{multline*}
\sum_{j=1}^{N-1} \bigl| x_N \partial_{x_j} \left( |\na v_1|^2 -|\na v_2|^2 \right)
 \bigr|_{\al,\ol{B}} \le
\\
\left( |x_N D_{x'}^2 w_1|_{\al,\ol{B}} + 
|\partial_{x_N} w_1|_{1+\al,\ol{B}}\right) |w_2|_{1+\al,\ol{B}}+ \\
|w_1|_{1+\al,\ol{B}} \left( |x_N D_{x'}^2 w_2|_{\al,\ol{B}} +|\partial_{x_N} w_2|_{1+\al,\ol{B}} \right).
\end{multline*}
We note that $w_1$ satisfies \eqref{oblique-laminar}-\eqref{equator} 
with $\fhi$ and $\psi$ replaced by $\fhi_1+\fhi_2$ and $2\psi$, respectively.
Hence it follows from Theorem~\ref{prop:estimate-oblique-interior0} that
\begin{multline*}
|w_1|_{1+\al,\ol{B}} +|\partial_{x_N} w_1|_{1+\al,\ol{B}} +|x_N D_{x'}^2 w_1|_{\al,\ol{B}} \le \\ C\left( |\fhi_1|_{1+\al,\ol{B}} +|\fhi_2|_{1+\al,\ol{B}} +|\psi|_{3/2+\al,\ol{D}} \right).
\end{multline*}
Moreover, since $w_2$ solves \eqref{oblique-laminar}-\eqref{equator} 
with $\fhi=\fhi_1-\fhi_2$ and $\psi=0$,
Theorem~\ref{prop:estimate-oblique-interior0} shows that
\begin{equation}\label{T-estimate045}
|w_2|_{1+\al,\ol{B}} +|\partial_{x_N} w_2|_{1+\al,\ol{B}} +|x_N D_{x'}^2 w_2|_{\al,\ol{B}}
 \le C|\fhi_1-\fhi_2|_{1+\al,\ol{B}}.
\end{equation}
We thus obtain \eqref{T-estimate04}
by plugging \eqref{T-estimate041}-\eqref{T-estimate045} into \eqref{T-estimate040}.

It remains to prove (ii).
To this end, we assume that $\fhi \in C^{1+\al}_{\rm ax}(\ol{B})$ and that $\psi$ is constant.
Then, we can directly check that for any $(N-1) \times (N-1)$ orthogonal matrix ${\cR}$, 
the function $v({\cR}x',x_N)$ also satisfies \eqref{oblique-laminar}-\eqref{equator}.
Hence Proposition~\ref{prop:uniqueness} gives that $v(x',x_N)=v({\cR}x',x_N)$.
In particular, we have that $|\na v (x',x_N)|^2=|\na v(|x'|e_1',x_N)|^2$.
Therefore (i) of Proposition~\ref{lem:tT-estimates} and Lemma~\ref{lem:J-S} 
show that \eqref{tT-symmetry} holds,
and the proof is complete.
\end{proof}

\subsection{Contraction mappings and the proof of Theorem~\ref{thm:existence0}}

For $g \in C^{1+\al}(\ol{B})$ and $h \in \RR$,
we define operators $\Psi_g$ and $\tilde \Psi_{g,h}$ by
$$
\Psi_g[\fhi]=\frac{1}{2} \left( g^2-1 -\cT [\fhi]\right), 
\qquad
\tilde \Psi_{g,h}[\fhi]=\frac{1}{2} \left( g^2-1 -\tilde \cT_h [\fhi]\right).
$$
Note that, by Propositions~\ref{lem:T-estimates} and \ref{lem:tT-estimates}, 
$\Psi_g$ is a mapping from $C_{\rm even}^{1+\al}(\ol{B})$ into itself
and $\tilde \Psi_{g,h}$ is a mapping from $C_{\rm ax}^{1+\al}(\ol{B})$ into itself.
We also define closed sets of $C^{1+\al}(\ol{B})$ by
\begin{gather*}
X_g=\{ \fhi \in C_{\rm even}^{1+\al}(\ol{B}): |\fhi|_{1+\al,\ol{B}}\le |g^2-1|_{1+\al,\ol{B}} \},
\\
\tilde X_{g,h}=\{ \fhi \in C_{\rm ax}^{1+\al}(\ol{B}):
 |\fhi|_{1+\al,\ol{B}}\le |g^2-1|_{1+\al,\ol{B}} +|h|\},
\end{gather*}
and positive constants $\de_1$ and $\de_2$ by
\begin{equation*}
\de_1=\min \left\{ \frac{1}{C_1}, \frac{\la}{C_2} \right\},  
\qquad
\de_2=\min \left\{ \frac{1}{2C_3}, \frac{2\la}{3C_4} \right\}.
\end{equation*}
Here, $\la \in (0,1)$ is an arbitrary fixed constant,
and $C_1$, $C_2$, $C_3$, $C_4$ are the constants 
given in Propositions~\ref{lem:T-estimates} and \ref{lem:tT-estimates}.

\begin{lem}\label{lem:fixed-point}
The following hold.
\begin{itemize}
\item[(i)]
If $g \in C_{\rm even}^{1+\al}(\ol{B})$ satisfies $|g^2-1|_{1+\al,\ol{B}} \le \de_1$,
then $\Psi_g$ has a unique fixed point in $X_g$.
\item[(ii)]
If $g \in C_{\rm ax}^{1+\al}(\ol{B})$ and $h \in \RR$ satisfy 
$|g^2-1|_{1+\al,\ol{B}} +|h| \le \de_2$,
then $\tilde \Psi_{g.h}$ has a unique fixed point in $\tilde X_{g.h}$.
\end{itemize}
\end{lem}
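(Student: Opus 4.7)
The plan is a direct application of the Banach contraction principle. Both $C_{\rm even}^{1+\al}(\ol{B})$ and $C_{\rm ax}^{1+\al}(\ol{B})$ are closed subspaces of the Banach space $C^{1+\al}(\ol{B})$, so they are themselves Banach, and $X_g$ and $\tilde X_{g,h}$ are closed balls therein, hence complete metric spaces under the H\"older norm. It therefore suffices, in each of the two cases, to verify that the relevant operator is a self-map of the set and a strict contraction on it; the fixed point then follows immediately.

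For part (i), inserting the quadratic bound \eqref{T-estimate01} of Proposition~\ref{lem:T-estimates} into the definition of $\Psi_g$ yields, for $\fhi \in X_g$,
\[
|\Psi_g[\fhi]|_{1+\al,\ol{B}} \le \tfrac{1}{2}|g^2-1|_{1+\al,\ol{B}}\bigl(1+C_1|g^2-1|_{1+\al,\ol{B}}\bigr),
\]
so the first entry $1/C_1$ in the definition of $\de_1$ is precisely what is needed to make $\Psi_g$ carry $X_g$ into itself. The Lipschitz bound \eqref{T-estimate02} applied to $\fhi_1,\fhi_2 \in X_g$ gives
\[
|\Psi_g[\fhi_1]-\Psi_g[\fhi_2]|_{1+\al,\ol{B}} \le C_2|g^2-1|_{1+\al,\ol{B}}\,|\fhi_1-\fhi_2|_{1+\al,\ol{B}},
\]
and the second entry $\la/C_2$ in $\de_1$ is tailored so that this prefactor is bounded by $\la<1$.

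Part (ii) is the same two-step procedure, with Proposition~\ref{lem:tT-estimates} in place of Proposition~\ref{lem:T-estimates}. The preliminary observation is that $\psi\equiv h$ being a constant function implies $|h|_{3/2+\al,\ol{D}} = |h|$. Setting $M:=|g^2-1|_{1+\al,\ol{B}}$ and $N:=|h|$, and using the elementary inequality $|\fhi|_{1+\al,\ol{B}}^2+|h|^2 \le 2(M+N)^2$ for $\fhi\in\tilde X_{g,h}$, the estimate \eqref{T-estimate03} gives
\[
|\tilde\Psi_{g,h}[\fhi]|_{1+\al,\ol{B}} \le \tfrac{M}{2}+C_3(M+N)^2.
\]
The threshold $M+N\le 1/(2C_3)$ forces $C_3(M+N)^2\le (M+N)/2 \le M/2+N$, delivering the self-map property. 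From \eqref{T-estimate04} and a crude triangle estimate one extracts a contraction constant $\tfrac{3C_4}{2}(M+N)$, which is at most $\la$ under $M+N\le 2\la/(3C_4)$. Finally, Proposition~\ref{lem:tT-estimates}(ii) is what guarantees that $\tilde\Psi_{g,h}$ preserves the axial symmetry class, so the image truly lies in $\tilde X_{g,h}$ and not merely in a ball of $C^{1+\al}(\ol{B})$.

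No step here is genuinely difficult — the substantive analytic work has already been invested in Propositions~\ref{lem:T-estimates} and \ref{lem:tT-estimates}. The only point requiring a little care is the arithmetic bookkeeping between the self-map threshold and the contraction threshold, which produces the two-term minima defining $\de_1$ and $\de_2$; in part (ii) the extra variable $|h|$ is harmlessly absorbed by the inequality $(M+N)/2 \le M/2+N$.
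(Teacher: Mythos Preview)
Your proof is correct and follows the same approach as the paper: both apply the Banach contraction principle using the quadratic and Lipschitz estimates from Propositions~\ref{lem:T-estimates} and~\ref{lem:tT-estimates}, with the two entries in each $\de_j$ governing the self-map and contraction properties respectively. The paper's proof is in fact terser---it only spells out part~(i) and dismisses part~(ii) with ``can be proved in the same way''---whereas you carry out the arithmetic for~(ii) explicitly, including the absorption of $|h|$ via $(M+N)/2 \le M/2+N$ and the crude bound $|\fhi_1|+|\fhi_2|+|h| \le 3(M+N)$ that explains the factor $3$ in $\de_2 = \min\{1/(2C_3),\,2\la/(3C_4)\}$.
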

\begin{proof}
We show (i).
Set $\de=|g^2-1|_{1+\al,\ol{B}}$, so that $\de \le \de_1$.
For $\fhi \in X_g$, we see from Proposition~\ref{lem:T-estimates} that 
$\Psi_g[\fhi] \in C_{\rm even}^{1+\al}(\ol{B})$ and
\begin{equation*}
\bigl|\Psi_g[\fhi]\bigr|_{1+\al,\ol{B}}  \le \frac{1}{2} \left( \de +\bigl|\cT [\fhi]\bigr|_{1+\al,\ol{B}} \right)
 \le \frac{1}{2} \left( \de +C_1 |\fhi|_{1+\al,\ol{B}}^2 \right)
  \le \frac{1}{2} \left( 1+C_1 \de_1 \right) \de \le \de.
\end{equation*}
Hence we have that $\Psi_g(X_g)\subseteq X_g$.
\par
Furthermore, inequality \eqref{T-estimate02} shows that for $\fhi_1, \fhi_2 \in X_g$,
\begin{multline*}
\bigl|\Psi_g[\fhi_1] -\Psi_g[\fhi_2]\bigr|_{1+\al,\ol{B}} 
 =\frac{1}{2}\, \bigl|\cT[\fhi_1] -\cT[\fhi_2]\bigr|_{1+\al,\ol{B}} \le
\\
\frac{1}{2}\, C_2\, \left( |\fhi_1|_{1+\al,\ol{B}} +|\fhi_2|_{1+\al,\ol{B}}\right)
 |\fhi_1-\fhi_2|_{1+\al,\ol{B}} \le
C_2\, \de_1 |\fhi_1-\fhi_2|_{1+\al,\ol{B}}.
\end{multline*}
Thus, we infer that 
$$
\bigl|\Psi_g[\fhi_1] -\Psi_g[\fhi_2]\bigr|_{1+\al,\ol{B}}  \le 
\la\,|\fhi_1-\fhi_2|_{1+\al,\ol{B}},
$$
i.e. we have proved that $\Psi_g$ is a contraction mapping in $X_g$.
The Banach fixed point theorem then gives the desired conclusion.

The assertion (ii) can be proved in the same way, 
by applying Proposition~\ref{lem:tT-estimates} instead of Proposition~\ref{lem:T-estimates}.
\end{proof}

We are now in a position to prove Theorem~\ref{thm:existence0}.
\begin{proof}[Proof of Theorem~\ref{thm:existence0}]
We first prove (i).
Note that 
\begin{multline}
\label{g-1-estimate}
|g^2-1|_{1+\al,\ol{\Om}} \le C_0 |g+1|_{1+\al,\ol{\Om}} |g-1|_{1+\al,\ol{\Om}}
 \le \\
C_0 \left( |g-1|_{1+\al,\ol{\Om}} +2\right) |g-1|_{1+\al,\ol{\Om}}
\end{multline}
for some constant $C_0>0$.
Hence, if 
\begin{equation*}
|g-1|_{1+\al,\ol{\Om}} \le \sqrt{1+\frac{\de_1}{C_0}}-1,
\end{equation*}
we have that $|g^2-1|_{1+\al,\ol{\Om}} \le \de_1$.
\par
Suppose that the above condition is satisfied.
Then,  $\Psi_g$ has a unique fixed point $\fhi\in X_g$, by Lemma~\ref{lem:fixed-point}.
Let $v$ be the solution of \eqref{oblique-laminar}-\eqref{equator} with $\psi=0$.
Then $u=f+v$ is harmonic in $B$ and
\begin{multline}\label{boundary-condition}
|\na u|^2=|\na f|^2 +2\na f \cdot \na v +|\na v|^2 
=1+2\pa_{x_N} v +\cT [\fhi]= \\1+2\fhi +(g^2-1-2\Psi_g[\fhi])=g^2 \ \mbox{ on } \ \cS.
\end{multline}
This shows that $u$ is a solution of \eqref{backus-problem}.
Moreover, Theorem~\ref{prop:estimate-oblique-interior0} and the fact that $\fhi \in X_g$ 
give that $u$ is in $C_{\rm odd}^{1+\al}(\ol{B})$ and
\begin{equation*}
|u-f|_{1+\al,\ol{B}} =|v|_{1+\al,\ol{B}} \le C\,|\fhi|_{1+\al,\ol{B}}
 \le C\, |g^2-1|_{1+\al,\ol{B}} \le C\,|g-1|_{1+\al,\ol{B}},
\end{equation*}
where $C>0$ is a constant.
The assertion (i) thus holds if we take $\de_0$ 
such that $\de_0 \le \sqrt{1+\de_1/C_0}-1$.

For (ii), we may assume $h=0$, by considering $\tilde{u}=u-h$ instead of $u$. 
Then, (ii) can be shown in a similar way.
We see from \eqref{g-1-estimate} that $|g^2-1|_{1+\al,\ol{\Om}} \le \de_2$ if
\begin{equation*}
|g-1|_{1+\al,\ol{\Om}} \le \sqrt{1+\frac{\de_2}{C_0}}-1.
\end{equation*}
Under this condition, Lemma~\ref{lem:fixed-point} shows that  
$\tilde \Psi_{g,0}$ has a fixed point $\tilde \fhi\in \tilde X_{g,0}$.
We put $\tilde u=f+\tilde v$ for the solution $\tilde v$ of \eqref{oblique-laminar}-\eqref{equator} 
with $\fhi=\tilde \fhi$ and $\psi=0$.
Then $\tilde u$ solves \eqref{backus-problem},
since the same computation as in \eqref{boundary-condition} is valid thanks to \eqref{tT-symmetry}.
The fact that $\tilde u \in C_{\rm ax}^{1+\al}(\ol{B})$ and the inequality
$|\tilde u-f|_{1+\al,\ol{B}} \le C\,|g-1|_{1+\al,\ol{B}}$ 
follow from Theorem~\ref{prop:estimate-oblique-interior0}.
We have thus shown (ii), and the proof is complete.
\end{proof}

\smallskip

\section*{Acknowledgements}

The authors would like to thank the anonymous referee for useful suggestions for improving the readability of the presentation. 
The first author was partially supported by the Grant-in-Aid for Early-Career Scientists 19K14574, Japan Society for the Promotion of Science.
The second author was partially supported by the Gruppo Nazionale per l'Analisi Matematica, la Probabilit\`a e le loro Applicazioni (GNAMPA) dell'Istituto Nazionale di Alta Matematica (INdAM). 
The third author was supported in part by the Grant-in-Aid for Scientific Research (C) 20K03673, Japan Society for the Promotion of Science. 
This research started while the second author was visiting the Department of Mathematics of Tokyo Institute of Technology. He wants to thank their kind hospitality.


\begin{thebibliography}{DDO}


\bibitem{Al} Sh.~A.~Alimov, \emph{On a problem with an oblique derivative}, (Russian)
Differentsial'nye Uravneniya 17 (1981), no. 10, 1738--1751, 1915. English translation: Differential Equations 17 (1981), no. 10, 1073--1083 (1982).

\bibitem{Ba1}
G.~E.~Backus, \emph{Application of a non-linear boundary value problem for the Laplace's equation to gravity and geomagnetic surveys}, Quart. J. Mech. Appl. Math. XXI (1968), 195--221.

%
%
\bibitem{CK} D.~Colton, R.~Kress, Integral equation methods in scattering theory. John Wiley \& Sons, Inc., New York, 1983. 

\bibitem{DDO}
G.~D\'iaz, J.~J.~D\'iaz, J.~Otero, \emph{Construction of the maximal solution of Backus' problem
in geodesy and geomagnetism}, Stud. Geophys. Geod. 55 (2011), 415-440.



\bibitem{GT}
D.~Gilbarg, N.~S.~Trudinger,
Elliptic partial differential equations of second order.
Reprint of the 1998 edition, Springer-Verlag, Berlin,  2001.

\bibitem{HHIKL} M.~H\"am\"al\"ainen, R.~Hari, R.~J.~Ilmoniemi, J.~Knuutila, O.~V.~Lounasmaa, \emph{Magnetoencephalography -- theory, instrumentation, and applications to noninvasinve studies of the working human brain}, Review of Modern Physics, 65 (1993), 413--497.

\bibitem{Ha} R.~S.~Hamilton, \emph{The inverse function theorem of Nash and Moser}, Bull. Amer. Math. Soc. 7 (1982), 65--222. 

\bibitem{Jo} M.~C.~Jorge, \emph{Local existence of the solution to a nonlinear inverse problem in gravitation}, Quart. Appl. Math. 45 (1987), 287--292.

\bibitem{JM}
M.~C.~Jorge, R.~Magnanini, \emph{Explicit calculation of the solution to Backus problem with
condition for uniqueness}, J. Math. Anal. Appl. 173 (1993), 515--522.

\bibitem{KMO}
T.~Kan, R.~Magnanini, M.~Onodera,
\emph{Backus problem in geophysics: a resolution near the dipole in fractional Sobolev spaces},
NoDEA Nonlinear Differential Equations Appl. 29 (2022), no.~3, Paper No. 21, 29 pp.
 
\bibitem{LL} E.~H.~Lieb, M.~Loss, Analysis. American Mathematical Society, Providence, RI, 1997.

\bibitem{Ma}
R.~Magnanini, \emph{A fully nonlinear boundary value problem for the Laplace equation in dimension two}, Appl. Anal. 39 (2-3), 185--192.

\bibitem{Mo1} J.~Moser, \emph{A new technique for the construction of solutions of nonlinear differential equations}, Proc. N. A. S. 47 (1961), 1824--1831.

\bibitem{Mo2} J.~Moser,  \emph{A rapidly convergent iteration method and non-linear partial differential equations -- I}, Ann. Sc. Norm. Sup. Pisa (3) 20 (1966), 499--435.

\bibitem{Na} J.~Nash, \emph{$C^1$ isometric imbeddings}, Ann. of Math. 60 (1954), 383--396.

\bibitem{Ru} W.~Rudin, Real and Complex Analysis. Second edition. McGraw-Hill Book Co., New York-D\"usseldorf-Johannesburg, 1974.

%
%
%

\end{thebibliography}
\end{document}